\documentclass{article}

\usepackage{lipsum}
\usepackage{amsfonts}
\usepackage{amsmath}
\usepackage{amsthm}
\usepackage{graphicx}
\usepackage{epstopdf}
\usepackage{algorithmic}
\usepackage{todonotes}
\usepackage{enumitem}
\usepackage{comment}
\usepackage{fancybox}

\newtheorem{remark}{Remark}
\newtheorem{definition}{Definition}

\newtheorem{lemma}{Lemma}
\newtheorem{proposition}{Proposition}
\newtheorem{theorem}{Theorem}

\newcommand{\spsd}{\succeq 0}

\newcommand{\spd}{\succ 0}
\newcommand{\range}{\mathrm{range}}

\newcommand{\T}{\mathrm{T}}
\newcommand{\st}{\mathcal{S}^3}

\title{Classification of Double Saddle-Point Systems}

\author{Susanne Bradley\thanks{The University of British Columbia, 
  \texttt{\{smbrad,greif\}@cs.ubc.ca}} \and Chen Greif\footnotemark[2]}

\newcommand{\sA}{\mathcal{A}}
\newcommand{\sD}{\mathcal{D}}
\newcommand{\sK}{\mathcal{K}}
\newcommand{\sL}{\mathcal{L}}

\newcommand{\sT}{\mathcal{T}}

\begin{document}

\maketitle

\begin{abstract}
We offer a classification of a broad and practically relevant class of symmetric double saddle-point system. At the core of the paper is the division  of the associated matrices into ``block-arrow'' and ``block-tridiagonal'' forms. We describe relevant applications, invertibility conditions, spectral properties, and  block preconditioners. Our discussion is kept within a general framework rather than tailored to specific applications.
\end{abstract}



\section{Introduction}
\label{sec:intro}
Double (or twofold) saddle-point systems are linear systems whose associated matrices can be written in a $3 \times 3$-block form, with specific structural properties. These systems have attracted growing attention since the early 2000s, with a noticeable proliferation of work over the past decade, driven by their appearance in diverse applications and the challenges they pose for numerical solution. These systems are difficult to solve efficiently because of their indefiniteness, strong block coupling, conditioning issues, and the need to preserve properties of the underlying operators. And yet, despite substantial research on the topic, their formal definition and systematic characterization remain incomplete in the literature. This gap motivates the present work, whose primary goal is to provide a clear and coherent classification of these linear systems.

The term ``twofold saddle-point problem'' first appeared in the early 2000s in the work of Gatica and collaborators, who introduced and analyzed such formulations, and provided solvability and error estimates \cite{BarrientosGaticaStephan2001,GaticaHeuer2002,GaticaHeuerMeddahi2003,GaticaMeddahi2000}. The alternative term  ``double saddle-point'' (in the specific context of block linear systems) first appears in the 2011 paper of Howell \& Walkington \cite{hw11}. Other early uses of the term were by Ramage \& Gartland to describe liquid-crystal director models \cite{GartlandRamage2015,rg13}, by Planas Badenas to describe incompressible magnetohydrodynamics \cite{p13}, and by Kolmbauer and Langer to describe time-harmonic eddy current problems \cite{kl13}. Beik \& Benzi used the same term and investigated iterative methods for these systems  \cite{BeikBenzi2018}. As is common for terminology that develops across multiple application communities, these early uses refer to systems with differing algebraic structures and levels of symmetry. Over time, however, the term has come to be used more narrowly to describe a specific class of structured saddle-point systems. Significant work has been done on numerical solution methods for these systems, including block preconditioning; see, e.g., \cite{BalaniHajarianBergamaschi2024,BergamaschiTriangular2025,BergamaschiEtAl2025,SognZulehner2017,SognZulehner2019} and the references therein.

We define a broad class of double saddle-point matrices as those that have one of the following canonical forms (or can be symmetrically permuted into one of these forms):
\begin{equation}
\label{eq:2sp_arrow_ns}
   \sA_{\text{ns}} = \left[ \begin{array}{ccc}
    A_1 & B_3^T & B_4^T \\
    B_1 & -A_2 & 0 \\
    B_2 & 0 & -A_3
    \end{array} \right] 
\quad {\rm or} \quad
    \sT_{\text{ns}} = \left[ \begin{array}{ccc}
    A_1 & B_3^T & 0 \\
    B_1 & -A_2 & B_4^T \\
    0 & B_2 & A_3
    \end{array}\right],
\end{equation}
where the notation $\sA$ denotes ``arrow'' and $\sT$ denotes ``tridiagonal,'' referring to the block structure of these matrices.
The subscript ``ns'' denotes ``nonsymmetric.''  We assume that the (1,1) block $A_1$ is at least as large in size as the (2,2) and (3,3) blocks, and that the symmetric parts of the diagonal blocks $A_1, A_2,$ and $A_3$ are positive semidefinite.
These conditions alone are not restrictive enough; for example, they do not exclude trivial cases such as the zero matrix. To better reflect structures encountered in practice, we will narrow in on a particular family of double saddle-point matrices, which satisfy additional requirements.

One can easily observe differences between the two forms in \eqref{eq:2sp_arrow_ns}—for example, the differing placements of the $B$ blocks and the sign of $A_3$. These differences  reflect variations in the literature, where matrices with different layouts have been referred to as ``double saddle-point systems.” The common thread and a reason for this terminology is that both forms can be expressed in the structure of a {\em classical} saddle-point system. 
To make this connection concrete, we recall the following definition:

\begin{definition}\cite[Eq. (1.1)--(1.2)]{bgl05}
\label{defn:sp}
A saddle-point system is a $2 \times 2$-block linear system of the form
$$ \begin{bmatrix} A & B_1^T \\ B_2 & -C \end{bmatrix} \begin{bmatrix} x \\ y \end{bmatrix} = \begin{bmatrix} f \\ g \end{bmatrix},$$ where $A \in {\mathbb R}^{n \times n}$, $B_1, \ B_2 \in {\mathbb R}^{m \times n}$, $C \in {\mathbb R}^{m \times m}$, and $n \ge m.$
\end{definition}

The following partitioning reveals the classical saddle-point structure of $\sA_{\text{ns}}$:
\[
\sA_{\text{ns}} = \left[\begin{array}{c|cc}
A_1 & B_3^T & B_4^T \\
\hline
B_1 & -A_2 & 0 \\
B_2 & 0 & -A_3
\end{array}\right].
\]
The matrix $\sT_{\text{ns}}$ admits a classical saddle-point formulation after symmetric permutation of the second and third block-rows/columns. Specifically, if $A_1 \in {\mathbb R}^{n_1 \times n_1}, A_2 \in {\mathbb R}^{n_2 \times n_2}, A_3 \in {\mathbb R}^{n_3 \times n_3}$, we define the block permutation
\[
\Pi_{(23)} :=
\begin{bmatrix}
I_{n_1} & 0        & 0 \\
0       & 0        & I_{n_3} \\
0       & I_{n_2}  & 0
\end{bmatrix}.
\]
Then, the symmetrically block-permuted matrix can be written as a saddle-point matrix whose leading block is itself block diagonal:
\[
\sK_{\mathrm{ns}} := \Pi_{(23)}^{T}\, \sT_{\mathrm{ns}}\, \Pi_{(23)} = \left[\begin{array}{cc|c}
A_1 & 0 & B_3^T \\
0 & A_3 & B_2 \\
\hline
B_1 & B_4^T & -A_2
\end{array}\right].
\] 

Our characterization of double saddle-point systems thus extends Definition \ref{defn:sp}. 
Because double saddle-point matrices have nine blocks, covering all possible block configurations and attributes would yield an unworkably long list. For this reason, we focus on a problem class that, while narrower, is of central importance, and we provide a  unified description of its mathematical and numerical properties.

 \begin{definition}
We define the family $\st$  as the set of symmetric nonsingular $3 \times 3$-block matrices 
\begin{equation}
\label{eq:2sp}
   \sA = \left[ \begin{array}{ccc}
    A_1 & B_1^T & B_2^T \\
    B_1 & -A_2 & 0 \\
    B_2 & 0 & -A_3
    \end{array} \right] 
\quad {\rm or} \quad
    \sT = \left[ \begin{array}{ccc}
    A_1 & B_1^T & 0 \\
    B_1 & -A_2 & B_2^T \\
    0 & B_2 & A_3
    \end{array}\right],
\end{equation}
where $A_1 \in {\mathbb R}^{n_1 \times n_1}$, $A_2 \in {\mathbb R}^{n_2 \times n_2}$, and $A_3 \in {\mathbb R}^{n_3 \times n_3}$ for either of the two forms, and  the dimensions of $B_1$ and $B_2$ follow accordingly (and are different for each of those forms). The matrices $\sA$  and $\sT$ satisfy the following conditions:
\begin{enumerate}[label={[C\arabic*]}]
    \item \label{[C1]} The (1,1)-block $A_1$ is at least as large in size as the (2,2)- and (3,3)- blocks and is symmetric positive definite.
    \item \label{[C2]} The (2,2)- and (3,3)-blocks are symmetric positive semidefinite.
    \item \label{[C3]}The off-diagonal blocks $B_1$ and $B_2$ either have full rank or have a small nullity (e.g., 1).
\end{enumerate}
\label{defn:dsp}
\end{definition}

The size and positive definiteness restriction \ref{[C1]} on the leading block in general  prevent us from converting $\sA$ to $\sT$ and vice versa using simple symmetric permutations and block negations; it is therefore necessary to refer to them as two distinct forms of block matrices. We also note that it is common to assume for $\sA$ that $n_1 \ge n_2+n_3$ and for $\sT$ that $n_1 \ge n_2 \ge n_3$; see Lemma \ref{lem:dimr} for a justification.\textbf{}

\begin{remark}
One could argue that matrices of the form
\begin{equation}
    \label{eq:K}
\sK  = \left[\begin{array}{ccc}
A_1 & 0 & B_1^T \\
0 & A_2 & B_2^T \\
B_1 & B_2 & -A_3
\end{array}\right],
\end{equation}
i.e., the symmetric counterpart of $\sK_{\rm ns}$, should also be included in the $\st$ family.
We do not introduce $\sK$ as a separate representative, since such matrices can be symmetrically permuted into the block-tridiagonal form $\sT$. Our choice of $\sT$ as the canonical representative of this subclass of double saddle-point systems reflects both established usage and practical considerations. The block-tridiagonal ordering is the form most commonly adopted in the numerical linear algebra and PDE communities when referring to ``double saddle-point'' systems, and it is the setting in which much of the existing spectral analysis and preconditioning theory has been developed. For these reasons, we adopt $\sT$ as the representative formulation, while noting that matrices of the form \eqref{eq:K} fall within the same class up to symmetric permutation.
\end{remark}

Matrices of the block-arrow form $\sA$ have been studied in, e.g., \cite{bb18,bb19} and arise in applications such as liquid crystal director modeling \cite{rg13}.
Matrices of the block-tridiagonal  form $\sT$ have been analyzed mathematically in, for example, \cite{ak25, bmpp25, bg22, sz18} and arise in applications such as PDE-constrained optimization \cite{bmpp24}.

Having defined the matrix classes of interest, we proceed to survey examples arising from applications in Section \ref{sec:applications}. In Section \ref{sec:class} we describe a framework, derived through a lens of constrained optimization, that unifies the block-arrow and block-tridiagonal forms of double saddle-point matrices. We present structural and spectral properties of these matrices in Section \ref{sec:properties}, which we will then use to motivate and analyze preconditioners (with a focus on Schur complement-based approaches) in Section \ref{sec:iter-prec}. Section \ref{sec:conclusion} presents concluding remarks.

\section{Examples of $\st$ matrices}
\label{sec:applications}
In this section, we present several examples. It is by no means an exhaustive list; we limit ourselves to a few representative instances of the $\st$ class. We note that some of the instances we present do not immediately appear to belong to $\st$, but simple transformations (such as negation or scaling) yield matrices that fit the requirements of Definition \ref{defn:dsp}. Such transformations may not be applied in practice, but the solution methods may be similar. We thus find it useful to include a few such instances.

\subsection{Block-tridiagonal form}
We begin with the family of block-tridiagonal matrices. 
\subsubsection{PDE-constrained optimization}
\label{sec:PDEC}

Consider a discretized optimization problem of the form
\begin{align}
\begin{split}
&\min_{y,u} \frac{1}{2} y^T C y -y^T w + \frac{\beta}{2} u^T R u \\
& \textrm{subject to } Ky + Lu = d,
\end{split}
\label{eq:pdeco}
\end{align}
where $K \in \mathbb{R}^{n \times n}$ is a stiffness matrix corresponding to a partial differential equation; $L \in \mathbb{R}^{n \times m}$ is a control matrix; $C \in \mathbb{R}^{n \times n}$ is a positive semidefinite (and sometimes positive definite) observation matrix; $R \in \mathbb{R}^{m \times m}$ is a positive definite regularization matrix; and $\beta > 0$ is a regularization parameter (often around $10^{-2}$ in practice). 

The Lagrangian of \eqref{eq:pdeco} has the form
\[
\mathcal{L}(y,u,\lambda) = \frac12 y^T C y - y^T w + \frac{\beta}{2} u^T R u + \lambda^T(Ky+Lu-d).
\]

The first-order optimality conditions yield the linear system
\[
\begin{bmatrix}
C & 0 & K^T \\
0 & \beta R & L^T \\
K & L & 0
\end{bmatrix}
\begin{bmatrix}
y \\
u \\
\lambda
\end{bmatrix}
= \begin{bmatrix}
w \\
0 \\
d
\end{bmatrix}.
\]
Reordering the unknowns yields a system of the form $\sT$:
\begin{equation}
\label{eq:pdeco_gen_2sp}
\begin{bmatrix}
C & K^T & 0 \\
K & 0 & L \\
0 & L^T & \beta R
\end{bmatrix}
\begin{bmatrix}
y \\
\lambda \\
u
\end{bmatrix} = \begin{bmatrix}
w \\
d \\
0
\end{bmatrix}.
\end{equation}

Such problems were introduced by Lions \cite{lions1972some}, and solution methods have been investigated  extensively in the literature; see, for example,  \cite{ito1996augmented}, for an early reference.

In many cases, there are additional assumptions or restrictions on the coefficient blocks, and most preconditioners for PDE-constrained optimization are constructed for a more restrictive problem class. For instance, it is common to assume that the operator corresponding to $K$ is elliptic \cite{cfms13,p12}. In this case, the matrix $K$ is singular if the underlying PDE has pure Neumann boundary conditions, and positive definite otherwise. Some papers \cite{cfms13,p12} focus on the case of distributed control and observations, with identical discretizations for the state and control variables. The matrix (with block-tridiagonal block ordering) then has the form
\[
\begin{bmatrix}
    M & K & 0 \\
    K & 0 & -M \\
    0 & -M & -\beta M
\end{bmatrix}.
\]

In this case, all the block matrices are square and all the nonzero blocks are invertible, which leads to simpler numerical solution approaches. Preconditioners based on the classical saddle-point formulation for these problems have been developed in, for example, \cite{krt21,p12,rdw10,SchoeberlZulehner2007}, while preconditioners based on the double saddle-point formulation have been developed in \cite{bsz20,bg22,pp21,sz18}. 

\subsubsection{Constrained weighted least-squares}
Another example arises from constrained weighted least-squares problems, where we  minimize the residual subject to linear equality constraints (see \cite[Chapter 3]{b24}):
\begin{align*}
\min_y \ \ &||c - Gy ||_2\\
\textrm{s.t.} \ \ &Ey = d.
\end{align*}
First-order optimality conditions for this formulation give rise to the block-tridiagonal system
\[
\begin{bmatrix}
    I & G & 0 \\
    G^T & 0 & E^T \\
    0 & E & 0
\end{bmatrix}\begin{bmatrix}
    r \\
    y \\
    \lambda
\end{bmatrix} = \begin{bmatrix}
    c \\
    0 \\
    d
\end{bmatrix},
\]
where $r$ denotes the residual and $\lambda$ the vector of Lagrange multipliers.

\subsubsection{Dual-dual finite element formulations}
Dual-dual finite element formulations solve second-order elliptic problems by introducing the gradient as an additional unknown (see, e.g., \cite{gh01}). The resulting linear systems exhibit the block structure
\[
\begin{bmatrix}
A & B_1^T & 0 \\
B_1 & 0 & B^T \\
0 & B & 0
\end{bmatrix},
\]
where $A$ is positive definite and the constraint matrices $B_1$ and $B$ typically have full row rank. These methods have been applied in various contexts, including hyperelasticity \cite{gh00}, and have motivated extensions of iterative solvers  such as variants of conjugate gradients and minimum residual methods to handle this structure \cite{gh00b,gh01}.

\subsubsection{Boundary element tearing and interconnecting methods}
Boundary Element Tearing and Interconnecting (BETI) methods are boundary element counterparts of
the well–known Finite Element Tearing and Interconnecting (FETI)
methods \cite{farhat1991method}. In the inexact, data-sparse BETI discretization \cite{losz07}, one obtains the block system
\[
\begin{bmatrix}
V & -K & 0 \\
-K^T & -D & B^T \\
0 & B & 0
\end{bmatrix}
\begin{bmatrix}
t \\
u \\
\lambda
\end{bmatrix} = \begin{bmatrix}
g \\
f \\
0
\end{bmatrix}.
\]
The block-diagonal operators $V$, $D$, and $K$ are defined by elementwise boundary integral operators. The matrix $V$ is positive definite. The matrix $D$ is block diagonal and generally has high nullity, as many of its constituent diagonal blocks are singular with a kernel spanned by the all-ones vector. The coupling matrix $B$ enforces interface continuity of local potentials.

\subsubsection{Poroelasticity}
Poroelasticity describes the coupling between flow in porous media and mechanics. The papers~\cite{Castelletto2016,ferronato2010,ffjct19} provide a comprehensive list of references that discuss related multiphysics models and applications (including the Biot equations) and numerical solution methods based on finite element discretizations and preconditioned iterative solvers.
In \cite{ffjct19}, for example, Ferronato et al.
derive a block Jacobian for the quasi-static poromechanical system of the form
\[
\sK = \begin{bmatrix}
    K & 0 & -Q \\
    0 & A & -B \\
    Q^T & \gamma B^T & P
\end{bmatrix},
\]
where $K$ is the SPD elastic stiffness matrix (and is the largest of the diagonal blocks), $A$ is an SPD mass matrix for Darcy's velocity in mixed form, $P$ is the diagonal capacity matrix for fluid flow, and $Q$ and $B$ are blocks coupling displacements and Darcy's velocities to the pressure unknowns. The parameter \(\gamma>0\) is determined by time discretization, commonly \(\gamma\in\{\Delta t,\,\tfrac12\Delta t\}\).

While $\sK$ does does not belong to the $\st$ class, simple transformations can bring it into this form. (We note, however, that the authors of \cite{ffjct19} do not convert their matrices to symmetric form in their consideration of solvers.) For consistency with our sign convention and to expose a symmetric double saddle‑point structure, we apply a simple diagonal scaling (equivalent to redefining the Stokes/flow unknowns by \((\hat{\boldsymbol u},\hat p)=(-\boldsymbol u,\,\gamma p)\)), which yields
\begin{align*}
\tilde{\sK} &= \sK \underbrace{\begin{bmatrix}
	I & 0 & 0 \\
	0 & -I & 0 \\
	0 & 0 & \frac{1}{\gamma} I
	\end{bmatrix}}_{=: \Lambda} = \begin{bmatrix}
K & Q & 0 \\
Q^T & -P & B^T \\
0 & B & \frac{1}{\gamma}A
\end{bmatrix}.
\end{align*}

\subsection{Block arrow}
The second class we consider is the block-arrow form.
\subsubsection{Liquid crystal director modeling} In liquid crystal director modeling, we consider the problem of modeling equilibrium configurations of nematic liquid crystals \cite{rg13}. These systems possess a double saddle‑point structure due to pointwise unit‑vector constraints on the director field $n$ and coupling to an electric field via the electrostatic potential $U$. After discretization and Newton linearization, the inner linear system has the block form
\[
\begin{bmatrix}
    A & B & D \\
    B^T & 0 & 0 \\
    D^T & 0 & -C
\end{bmatrix}\begin{bmatrix}
    \delta n \\
    \delta \lambda \\
    \delta U
\end{bmatrix} = - \begin{bmatrix}
    \nabla_n L \\
    \nabla_{\lambda} L \\
    \nabla_U L
\end{bmatrix},
\]
where $L$ is the discrete Lagrangian. The matrix $A = A_0 + \Lambda$, where $A_0$ represents the distortional stiffness (similar to a discrete vector Laplacian) and $\Lambda$ is a diagonal matrix of Lagrange multipliers. The matrix $B$ encodes the pointwise unit vector constraint on the director field, and $D$ encodes the coupling between $n$ and $U$. The matrix $C$ is a symmetric positive definite matrix arising from the potential equation. We note that for this problem, the matrix $D^T$ is not full rank, but its nullity is 1, and thus satisfies Condition \ref{[C3]} in Definition \ref{defn:dsp}.

Many papers on the block-arrow form of double saddle-point systems have focused on preconditioning for liquid crystal director modeling: see, for example, \cite{bb18,bb18b,ch24,cr23,z25,zl25,zzz25}.

\subsubsection{Interior-point methods}

Consider a quadratic program (QP) in standard form \cite[Chapter 16]{nw06}
\begin{align}
	\begin{split}
	\label{eq:qp_std_form}
	&\min_x \ c^T x + \frac{1}{2}x^T H x \ \ \textrm{ s.t. } \ \ Jx = b, x \ge 0,\\
	&\max_{x, y, z} \ b^T y - \frac{1}{2} x^T H x \ \ \textrm{ s.t. } \ \ J^T y + z -Hx = c, z \ge 0,
	\end{split}
\end{align}
where inequalities are understood elementwise, and $y$ and $z$ are vectors of the Lagrange multipliers. If we solve the quadratic program via a primal-dual interior-point method, the linear system to be solved at each step is given by \cite{gmo14}
\[
\begin{bmatrix}
H & J^{T} & -I \\
J & 0 & 0 \\
-Z & 0 & -X
\end{bmatrix}
\begin{bmatrix}
\Delta x \\[2pt]
-\Delta y \\[2pt]
\Delta z
\end{bmatrix}
=
\begin{bmatrix}
-\,c - Hx + J^{T}y + z \\[2pt]
b - Jx \\[2pt]
XZe - \tau e
\end{bmatrix},
\]
where $X$ and $Z$ are diagonal matrices of the current $x_i$ and $z_i$ iterates, and $\tau$ is a barrier parameter. With added regularization \cite{fo12}, the system has the form
\[
\sA_{ipm} = \begin{bmatrix}
H+\rho I & J^{T} & -I \\
J & -\delta I & 0 \\
-Z & 0 & -X
\end{bmatrix},
\]
where $\rho, \delta > 0$ are regularization parameters. While the system is nonsymmetric, we note that it can easily be symmetrized (see \cite{gmo14} for details) to yield the matrix
\[
\hat{\sA}_{ipm} :=  \begin{bmatrix}
H+\rho I & J^{T} & -Z^{1/2} \\
J & -\delta I & 0 \\
-Z ^{1/2} & 0 & -X
\end{bmatrix}.
\]

\subsubsection{Stokes-Darcy equations}
The comprehensive survey paper of Discacciati \& Quarteroni \cite{discacciati2009navier}
provides a rich description of the Stokes-Darcy equations, covering many aspects, including the physical setup, 
the physical interface conditions (like Beavers–Joseph–Saffman), and numerical solution techniques. Following the formulation of \cite{cmx09,cls16}, the discrete Stokes--Darcy interface problem leads to the $3\times3$ block system
\begin{equation}
\label{eqn:darcystokes}
\begin{bmatrix}
A_p & A_{\Gamma}^{T} & 0 \\
- A_{\Gamma} & A_f & B_f^{T} \\
0 & B_f & 0
\end{bmatrix}
\begin{bmatrix}
\phi_h \\[2pt]
\mathbf{u}_h \\[2pt]
p_h
\end{bmatrix}
=
\begin{bmatrix}
f_{p,h} \\[2pt]
\mathbf{f}_{f,h} \\[2pt]
g_h
\end{bmatrix}.
\end{equation}
Here $\phi_h$ denotes the piezometric head (a scaled/shifted Darcy pressure), while $\mathbf{u}_h$ and $p_h$ are the Stokes velocity and pressure. In the block matrix, $A_p$ is the Darcy stiffness (SPD), $A_f$ is the vector Laplacian (SPD), $A_{\Gamma}$ encodes the interface coupling conditions, and $B_f$ is the discrete divergence.

Equation \eqref{eqn:darcystokes} does not immediately fit into our definition of the $\st$ class because both nonzero diagonal blocks are positive definite and the (2,2)-block is larger than the (1,1)-block. However, after symmetric permutation and sign adjustment, we can obtain a double saddle-point system as follows:
\[
\begin{bmatrix}
    A_f & -A_{\Gamma} & B_f^T \\
    -A_{\Gamma}^T & -A_p & 0 \\
    B_f^T & 0 & 0
\end{bmatrix}\begin{bmatrix}
    \mathbf{u}_h \\
    \phi_h \\
    p_h
\end{bmatrix} = \begin{bmatrix}
    \mathbf{f}_{f,h} \\
    -f_{p,h} \\
    g_h
\end{bmatrix}.
\]
We note that the Marker-and-Cell formulation of Stokes-Darcy described in \cite{g26, gh23} yields a similar block structure to the one in \cite{cmx09,cls16}, but with the difference that the off-diagonals are written in symmetric form, while the middle diagonal block is (mildly) nonsymmetric due to the choice of the discretization. Therefore, that formulation does not belong to the $\st$ class even after block permutation.

\subsubsection{Magma/mantle dynamics} 
The two-phase flow equations that describe coupled magma/mantle dynamics
were derived by McKenzie \cite{mckenzie1984generation}.
Rhebergen et al. \cite{rwwk15} consider these equations on a domain  $\Omega \subset \mathbb{R}^d$, where $1 \le d \le 3$. The governing equations for coupled magma/mantle flow introduce a \emph{compaction pressure} $p_c$ to avoid an explicit grad--div term and yield a three-unknown system $(u,p,p_c)$ on $\Omega\subset\mathbb{R}^d$:
\begin{align*}
-\nabla \cdot \Big(\eta\big(Du-\tfrac{1}{3}(\nabla\!\cdot u)I\big)\Big) + \nabla p + \nabla p_c &= \phi\,e_3,\\
-\nabla \cdot u + \nabla \cdot \big(k \nabla p\big) &= \nabla \cdot \big(k e_3\big),\\
-\nabla \cdot u - \zeta^{-1}p_c &= 0,
\end{align*}
with shear viscosity $\eta>0$, bulk viscosity $\zeta>0$, permeability $k\ge 0$, porosity $\phi\in[0,1]$, and $e_3$ the unit vector aligned with gravity. Standard Dirichlet/Neumann data close the system.

A mixed finite element discretization (using an inf--sup stable velocity/pressure pair) leads to the symmetric $3\times3$ block system
\[
\begin{bmatrix}
\eta K & G^{T} & G^{T}\\
G & -kC & 0 \\
G & 0 & -\zeta^{-1}Q
\end{bmatrix}
\begin{bmatrix}
u\\[2pt] p\\[2pt] p_c
\end{bmatrix}
=
\begin{bmatrix}
f\\ g\\ 0
\end{bmatrix},
\]
where $K$ is the (SPD) elasticity operator including the grad--div contribution, $G$ is the discrete divergence (so $G^{T}$ is the negative discrete gradient), $C$ is the permeability-weighted pressure Laplacian, and $Q$ is the pressure mass matrix. Uniqueness of $p$ is enforced by fixing its mean so that $\ker(G^{T})=\operatorname{span}\{1\}$.

\subsubsection{Flow in fractured porous media}

Fractured porous media \cite{WarrenRoot1963}
describes materials like rocks or soils that contain both a porous matrix and networks of fractures, where fluid flow and transport occur through the combined effects of pores and cracks.
The paper of Antonietti et al. \cite{apfs20} targets the iterative solution of linear systems arising from hybrid‑dimensional Darcy flow in fractured porous media, where fractures are modeled as $(d-1)$-dimensional interfaces coupled to a $d$-dimensional porous matrix. The resulting algebraic system is a double saddle‑point problem for unknowns velocity $u$, pressure $p$, and fracture pressure $p_{\Gamma}$:
\[
\begin{bmatrix}
    M_c & B^T & C^T \\
    B & 0 & 0 \\
    C & 0 & -T
\end{bmatrix}\begin{bmatrix}
    u \\
    p \\
    p_{\Gamma} 
\end{bmatrix}= \begin{bmatrix}
        g \\
        h \\
        h_{\Gamma}
    \end{bmatrix}.
\]
Here $M_c$ is an SPD inner-product matrix including coupling terms on the fracture interface, $B$ is a discrete divergence operator, $C$ couples normal flux with fracture pressure, and $T$ is the positive semidefinite transmissibility matrix of the fracture discretization.

\section{Optimization interpretation}
\label{sec:class}

A large number of double saddle-point systems encountered in practice originate from the numerical solution of partial differential equations, which frequently involve constraints such as incompressibility. The resulting block structure can often be interpreted within the framework of constrained optimization. In this section, we introduce a constrained optimization perspective that accommodates both block-arrow and block-tridiagonal formulations.

The purpose of this section is to give a
unifying optimization interpretation for the various matrix structures that
appear under the label ``double saddle-point.'' In particular, we show that both
block-arrow and block-tridiagonal formulations arise naturally as first-order optimality (KKT) conditions of closely related constrained quadratic programs. This perspective also clarifies the connection between double saddle-point systems and the broader class of multiple saddle-point problems. By making these relationships explicit, we reconcile the multiple uses of the term ``double saddle-point'' found in the literature, and also clarify why the term ``saddle-point'' is appropriate in the description of all the matrices under discussion in this paper. 

That said, we recognize that the optimization formulation that we offer in this section is tailored to the setting of $\st$ matrices in terms of the requirement of symmetry and (semi)definiteness of the blocks. It would be harder to provide such interpretations for nonsymmetric formulations that often arise in multiphysics applications but are not covered under the $\st$ family. We also emphasize that the optimization viewpoint developed in this section is intended as a structural interpretation, not as a claim about the origin of all matrices in the $\st$ family. As seen in Section \ref{sec:applications}, while many double saddle-point systems do indeed arise as first-order optimality conditions of constrained optimization problems, many others (particularly those originating from the discretization of partial differential equations) do not. Nevertheless, the algebraic structure of these systems admits an interpretation that is formally equivalent to such optimality conditions, and it is this viewpoint that we exploit to unify the block-arrow and block-tridiagonal formulations and to connect double saddle-point systems to both classical saddle-point systems and to their multiple saddle-point generalizations.

\subsection{A standard quadratic program with equality constraints}
Consider the quadratic programming problem with equality constraints \cite{nw06}
\begin{align*}
\text{Minimize} \quad & \frac{1}{2} x^T H x - c^T x \\
\text{Subject to} \quad & J x = d,
\end{align*}
where:
 \( x \in \mathbb{R}^n \) contains the (primal) variables,
   \( H \in \mathbb{R}^{n \times n} \) is a symmetric (typically positive semidefinite) Hessian, 
   \( J \in \mathbb{R}^{m \times n} \) is the Jacobian matrix of constraints, and
  \( c \in \mathbb{R}^n \), \( d \in \mathbb{R}^m \) are given vectors.

The Lagrangian for this problem is given by
\[
\mathcal{L}(x, \lambda) = \frac{1}{2} x^T H x - c^T x + \lambda^T (J x - d).
\]
Here, the vector $\lambda \in \mathbb{R}^m   $ contains the Lagrange multipliers. 
To solve the problem, we set the gradient of the Lagrangian with respect to \( x \) and \( \lambda \) to zero:
\begin{align*}
\nabla_x \mathcal{L}(x, \lambda) &= H x - c + J^T \lambda = 0; \\
\nabla_\lambda \mathcal{L}(x, \lambda) &= J x - d = 0.
\end{align*}
Combining these conditions yields the classical saddle-point system
\begin{equation}
    \label{eq:classical_sp}
\begin{bmatrix}
H & J^T \\
J & 0
\end{bmatrix}
\begin{bmatrix}
x \\
\lambda
\end{bmatrix}
=
\begin{bmatrix}
c \\
d
\end{bmatrix}.
\end{equation}

Here the term ``saddle-point'' is used because the solution of the linear system is a saddle-point of the Lagrangian: the stationary point is a minimizer with respect to the primal variables and a maximizer with respect to the Lagrange multipliers. This is our basic building block for constructing double saddle-point systems.
\subsection{Constrained optimization yielding double saddle-point systems}
In the classical saddle-point system just discussed, the constrained optimization framework involves one set of primal variables and one set of constraints. In contrast, the $3 \times 3$-block case provides two distinct options. It could consist of 
one set of primal variables and two sets of constraints, resulting in the block-arrow structure. Or it could consist of
two sets of primal variables and one set of constraints, yielding the block-tridiagonal structure.

\subsubsection*{Block-arrow double saddle-point system}
For the block-arrow case, we consider the constrained optimization problem
\begin{align*}
\text{Minimize} \quad & \frac{1}{2} x_1^T H_1 x_1  - c_1^T x_1  \\
\text{subject to} \quad & J_1 x_1 = d_1 {\quad \rm and\quad }J_2 x_1 = d_2.
\end{align*}

We define the Lagrangian as
$$ \mathcal{L}(x_1,\lambda_1, \lambda_2) = \frac{1}{2} x_1^T H_1 x_1    -c_1^T x_1  + \lambda_1^T (J_1x_1 -d_1)+ \lambda_2^T (J_2 x_1 -d_2),$$
where $\lambda_1$ and $\lambda_2$ are the two vectors of Lagrange multipliers.
We differentiate and equate the gradient of $\mathcal{L}(x_1,\lambda_1,\lambda_2)$ to zero, and this gives us
\begin{equation}
\label{eq:opt22}
\begin{bmatrix}
H_1 & J_1^T  & J_2^T  \\
J_1 & 0 & 0 \\
J_2 & 0 & 0
\end{bmatrix}\begin{bmatrix}
    x_1 \\
    \lambda_1 \\
    \lambda_2
\end{bmatrix} = \begin{bmatrix}
    c_1 \\
    d_1 \\
    d_2
\end{bmatrix}.
\end{equation}
The matrix of \eqref{eq:opt22}  is a simple version of $\mathcal{A}$ defined in \eqref{eq:2sp},  where $A_1=H_1$, $A_2=A_3=0$, and $B_i=J_i, i=1,2$.

\subsubsection*{Block-tridiagonal double saddle-point system}
Consider the constrained optimization problem
\begin{align*}
\text{Minimize} \quad & \frac{1}{2} x_1^T H_1 x_1 + \frac{1}{2} x_2^T H_2 x_2 - c_1^T x_1 - c_2^T x_2 \\
\text{subject to} \quad & J_1 x_1 +J_2 x_2 = d_1.
\end{align*}
We define the Lagrangian as
$$ \mathcal{L}(x_1,x_2,\lambda) = \frac{1}{2} x_1^T H_1 x_1 + \frac12 x_2^T H_2 x_2  -c_1^T x_1 - c_2^T x_2 + \lambda^T (J_1x_1 +J_2x_2 -d_1),$$
where $\lambda$ is the vector of Lagrange multipliers.
We differentiate and equate the gradient of $\mathcal{L}(x_1,x_2,\lambda)$ to zero, and this gives us
\begin{equation}
\label{eq:opt}
\begin{bmatrix}
H_1 & 0 & J_1^T  \\
0 & H_2 & J_2^T \\
J_1 & J_2 & 0
\end{bmatrix}\begin{bmatrix}
    x_1 \\
    x_2 \\
    \lambda
\end{bmatrix} = \begin{bmatrix}
    c_1 \\
    c_2 \\
    d_1
\end{bmatrix}.
\end{equation}
After symmetrically permuting the second and third block-rows and block-columns, we obtain a specific instance of $\mathcal{T}$ defined in \eqref{eq:2sp}, where $A_1=H_1$, $A_2=0, A_3=H_2,$ $B_1=J_1,$ and $B_2=J_2^T$; the transposition in the last equality follows from the symmetric permutation.


\subsubsection*{Regularized extensions}
For the block-arrow form, we can add two quadratic penalty terms (corresponding to the two constraints) to the original optimization problem, resulting in
\begin{align*}
\text{Minimize} \quad & \frac{1}{2} x_1^T H_1 x_1 - c_1^T x_1 + \frac{1}{2} \lambda_1^T R_1 \lambda_1 + \frac{1}{2} \lambda_2^T R_2 \lambda_2 \\
\text{subject to} \quad & J_1 x_1 = d_1 \quad \text{and} \quad J_2 x_1 = d_2,
\end{align*}
where $R_1$ and $R_2$ are positive semidefinite. Setting the gradient of the Lagrangian to zero and reordering generates the linear system
\[
\begin{bmatrix}
H_1 & J_1^T  & J_2^T  \\
J_1 & -R_1 & 0 \\
J_2 & 0 & -R_2
\end{bmatrix}\begin{bmatrix}
    x_1 \\
    \lambda_1 \\
    \lambda_2
\end{bmatrix} = \begin{bmatrix}
    c_1 \\
    d_1 \\
    d_2
\end{bmatrix}.
\]

For the block-tridiagonal formulation, consider the original constrained optimization problem with one added quadratic penalty term
\begin{align*}
\text{Minimize} \quad & \frac{1}{2} x_1^T H_1 x_1 + \frac{1}{2} x_2^T H_2 x_2 - c_1^T x_1 - c_2^T x_2 + \frac{1}{2} \lambda^T R_1 \lambda \\
\text{subject to} \quad & J_1 x_1 + J_2 x_2 = d_1.
\end{align*}
Setting the gradient of the Lagrangian to zero and reordering generates the linear system
\[
\begin{bmatrix}
H_1 & 0 & J_1^T  \\
0 & H_2 & J_2^T \\
J_1 & J_2 & -R_1
\end{bmatrix}\begin{bmatrix}
    x_1 \\
    x_2 \\
    \lambda
\end{bmatrix} = \begin{bmatrix}
    c_1 \\
    c_2 \\
    d_1
\end{bmatrix},
\]
which, upon reordering the second and third blocks, yields the block-tridiagonal form.

\subsection{Multiple saddle-point systems}
\label{sec:multiple}
Multiple saddle-point systems are extensions of the block-tridiagonal double saddle-point system $\sT$. A triple (or threefold) saddle-point system is defined by
\begin{equation}
\label{eq:3sp}
\sT_4 = \begin{bmatrix}
H_1 &  J_1^T &  0 & 0  \\
J_1 & -R_1 & J_2^T & 0 \\
0 & J_2 & H_2 & J_3^T \\
 0 & 0 &  J_3 & -R_2
\end{bmatrix}.
\end{equation}
Similarly to our descriptions of the block-tridiagonal and block-arrow cases in the previous subsection, we note that we can write this as a constrained optimization problem with two primal and two constraint block variables
\begin{equation}
\label{eq:3sp_our_ordering}
\sK_4 = \left[\begin{array}{c c | c c}
H_1 & 0 & J_1^T & 0  \\
0 & H_2 & J_2 & J_3^T \\
\hline
J_1 & J_2^T & -R_1 & 0 \\
0& J_3 & 0 & -R_2
\end{array}\right], 
\end{equation}
where we have inserted a horizontal and a vertical line to highlight the partitioning that gives rise to a classical saddle-point matrix. Note that the $J_i$ matrices are  transposed in an alternating fashion in \eqref{eq:3sp_our_ordering}, for consistency with \eqref{eq:3sp}.

The matrix $\sT_4$ can be reordered to obtain the triple saddle-point matrix $\sK_4$ by symmetrically permuting the 2- and 3-blocks.

A number of papers \cite{bb26,bmpp25,pp21,sz18}  analyze \textit{multiple} saddle-point systems. These are block-$n \times n$, block-tridiagonal generalizations of classical and double saddle-point systems that take the form
\begin{subequations}
\label{eq:block-tridiag}
\begin{align}
\sT_n &\;=\;
\begin{bmatrix}
H_{1}      & J_{1}^T   &             &             &            &\\
J_{1}      & -R_{1}   & J_{2}^T      &             &            &\\
           & J_{2}    & H_{2}       & \ddots      &            &\\
           &          & \ddots      & \ddots      & J_{n-2}^T  & \\
           &          &             & J_{n-2}     & H_{n/2} & J_{n-1} \\
           &          &              &            & J_{n-1} & -R_{n/2}
\end{bmatrix} \qquad \textrm{for even $n$}, \\
\textrm {or} \nonumber \\
\sT_n &\;=\;
\begin{bmatrix}
H_{1}      & J_{1}^T   &             &             &            &\\
J_{1}      & -R_{1}   & J_{2}^T      &             &            &\\
           & J_{2}    & H_{2}       & \ddots      &            &\\
           &          & \ddots      & \ddots      & J_{n-2}^T  & \\
           &          &             & J_{n-2}     & -R_{(n-1)/2} & J_{n-1} \\
           &          &              &            & J_{n-1} & H_{(n+1)/2}
\end{bmatrix} \qquad \textrm{for odd $n$.}
\end{align}
\end{subequations}

In accordance  with our notation for this section, we use \( H_i \) to denote the  primal block variables and \( R_i \) to denote the regularizing terms. Extending our illustration of the triple saddle-point case, we observe that multiple saddle-point systems can  be framed within the constrained optimization setup introduced in this section. Specifically, such systems can be represented as a constrained optimization problem involving \( k \) primal block variables and \( k \) or \( k-1 \) block constraints, where $k = \lceil n/2  \rceil$. That is, if the matrix is written in the classical saddle-point form \eqref{eq:classical_sp}, then $H={\rm blkdiag} \{H_1, H_2,\dots \}$, and the constraint matrix $J$ is a block upper bidiagonal matrix; see Figure~\ref{fig:rb_permut} for further illustration. After applying red-black reordering \cite[p. 159]{young1971iterative} using the permutation vector
\[
\sigma = (1, k+1, 2, k+2, \ldots),
\]
we obtain the block-tridiagonal multiple saddle-point matrix  in~\eqref{eq:block-tridiag}.

 \begin{figure}[tbh!]
 \begin{center}
\includegraphics [width=0.7\textwidth]{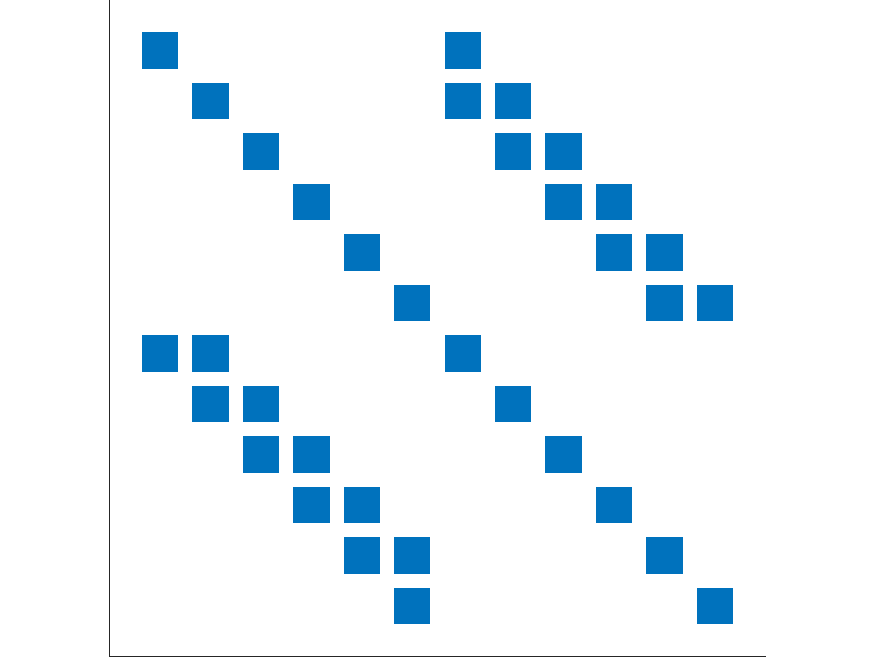}
\caption{An illustration of the permutation mechanism described in Section \ref{sec:multiple}: plot of the nonzero block structure of a block-$12 \times 12$ multiple saddle-point system, before permutation to a block-tridiagonal form.\label{fig:rb_permut} } 
\end{center}
\end{figure}

\section{Structural and Spectral Properties of $\st$
Matrices}
\label{sec:properties}

With the family $\st$ of double saddle-point matrices defined, a number of algebraic and spectral properties emerge from the underlying block structure. In this section we explore block-LDL factorizations, invertibility conditions, and spectral properties of these systems. These results unify and extend observations made in the literature and encompass both the block-tridiagonal and block-arrow formulations. Beyond their intrinsic interest, such spectral properties form the basis of various numerical solvers used in practice. One such widely used approach -- namely, Schur complement-based block preconditioning -- is described in Section \ref{sec:iter-prec}.

While $A_1$ is required to be nonzero, it is common to have applications where one or both of $A_2$ and $A_3$ are zero. This leads to additional necessary conditions for invertibility, as follows.
\begin{lemma}[Dimension restrictions for invertibility]   
The following dimension restrictions for invertibility of matrices in the $\st$ family hold:
\begin{enumerate}[label=(\roman*)]
\item For $\sA$, if $A_2=A_3=0$ then a necessary condition for invertibility is  $n_1 \ge n_2 + n_3$.
\item For $\sT$, if $A_3=0$ then a necessary condition for invertibility is $n_1 \ge n_2 \ge n_3$.  
\end{enumerate}
\label{lem:dimr}
\end{lemma}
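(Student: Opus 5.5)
For (i) and for the inequality $n_2 \ge n_3$ in (ii) I will use a rank-counting argument: a nonsingular matrix must have full column rank, so every subset of its columns is linearly independent. In each case I will isolate a group of block columns (or rows) whose nonzero part sits inside a block with too few rows, and read off the dimension inequality. The remaining inequality $n_1 \ge n_2$ in (ii) is not a consequence of invertibility at all; it is built into the definition of the $\st$ family.

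For (i), set $A_2 = A_3 = 0$ in $\sA$. The last two block columns then form the $(n_1+n_2+n_3) \times (n_2+n_3)$ matrix
\[
\begin{bmatrix} B_1^T & B_2^T \\ 0 & 0 \\ 0 & 0 \end{bmatrix}.
\]
Its rank is at most $n_1$, because only the first $n_1$ rows are nonzero. If $\sA$ is invertible, these $n_2+n_3$ columns must be linearly independent, so the rank equals $n_2+n_3$. Hence $n_2 + n_3 \le n_1$.

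For (ii), set $A_3 = 0$ in $\sT$. The last block row becomes $[\,0 \;\; B_2 \;\; 0\,]$, where $B_2 \in {\mathbb R}^{n_3 \times n_2}$. Invertibility of $\sT$ forces its rows to be linearly independent, so $\mathrm{rank}(B_2) = n_3$. Since $B_2$ has only $n_2$ columns, this gives $n_3 \le n_2$. The other inequality, $n_1 \ge n_2$, is exactly the size requirement in condition \ref{[C1]} of Definition \ref{defn:dsp}: the $(1,1)$-block must be at least as large as the $(2,2)$-block for any member of $\st$. Combining the two gives $n_1 \ge n_2 \ge n_3$.

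I expect no real obstacle in this argument. The one point needing care is the inequality $n_1 \ge n_2$. It cannot be derived from invertibility alone when $A_2 \ne 0$, since a nonzero $(2,2)$-block can supply rank on its own. The proof must therefore cite \ref{[C1]} explicitly for that part, rather than attempt a rank argument for it.
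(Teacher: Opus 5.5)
Your proof is correct and follows essentially the same route as the paper. For (ii) the paper likewise notes that $B_2$ must have full row rank when $A_3=0$, giving $n_3 \le n_2$, and takes $n_1 \ge n_2$ from \ref{[C1]}; for (i) it simply appeals to the classical saddle-point requirement that the constraint block $\begin{bmatrix} B_1 \\ B_2 \end{bmatrix}$ have full row rank, which your column-rank count makes explicit.
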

\begin{proof}
For $\sA$, the assertion follows immediately by properties of (classical) saddle-point systems; if $A_2=A_3=0$ we can refer to $\sA$ as a saddle-point matrix whose leading block is $A_1$, and hence a necessary condition is $n_1 \ge n_2+n_3$.

For $\sT$, consider the case where $A_3=0$. In this case, then $B_2$ must have full row rank and no more rows than columns. Thus, we require  $n_2 \ge n_3$. Since we have $n_1 \ge \max \{ n_2,n_3 \} $ by \ref{[C1]}, we end up with $n_1 \ge n_2 \ge n_3.$
\end{proof}

\subsection{Block-LDL decompositions} 
\label{sec:blockLDL}
The $\st$ matrices are symmetric indefinite, and hence they admit a block-LDL decomposition, which is useful not only in its own right but also as a building block for the design of block preconditioners. In this section we present those block decompositions.

Throughout this section, for simplicity and clarity, we will assume that  \ref{[C1]}--\ref{[C3]} hold with the stronger condition that $B_1$ and $B_2$ have full rank, and that the dimension restrictions stated in Lemma \ref{lem:dimr} hold.

\subsubsection{Block-arrow LDL decomposition}
\label{sec:block-arrow-ldl}
For the block-arrow matrix, we have
\[
\sA = \sL_a \sD_a \sL_a^T, 
\]
where
\begin{align}
\label{eq:arrow_ldl}
        \sL_a &=
		\begin{bmatrix}
			I & 0 & 0 \\
B_1 A_1^{-1} & I & 0 \\
B_2 A_1^{-1} & B_2 A_1^{-1} B_1^T S_{a,1}^{-1} & I
		\end{bmatrix} \quad {\rm and} \quad \sD_a = \begin{bmatrix}
				A_1 & 0 & 0 \\
				0 & -S_{a,1} & 0 \\
				0 & 0 & -S_{a,2}
		\end{bmatrix}.
\end{align}
The Schur complements $S_{a,1}$ and $S_{a,2}$ are given by
\begin{subequations}
    \label{eq:SCa}
    \begin{gather}
S_{a,1} = A_2 + B_1 A_1^{-1} B_1^T, 
\label{eq:sca1} \\
S_{a,2} = A_3 + B_2 A_1^{-1} B_2^T - B_2 A_1^{-1} B_1^T S_{a,1}^{-1} B_1 A_1^{-1} B_2^T.
 \label{eq:sca2}
\end{gather}
\end{subequations}

\begin{proposition}[Definiteness of block-arrow Schur complements] The matrices $A_1$ and  $S_{a,1}$ are symmetric positive definite. The matrix $S_{a,2}$ is symmetric positive semidefinite, and is positive definite if and only if $\sA$ is nonsingular.
\label{prop:spd}
\end{proposition}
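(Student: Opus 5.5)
The claim about $A_1$ is just condition \ref{[C1]}. For $S_{a,1}=A_2+B_1A_1^{-1}B_1^T$, symmetry is immediate. $A_2$ is positive semidefinite by \ref{[C2]}, and $A_1^{-1}$ is SPD. So $x^TS_{a,1}x \ge \|A_1^{-1/2}B_1^Tx\|^2$, which is positive for $x\neq 0$ provided $B_1^T$ has trivial null space. This holds because, under the standing assumption of Section \ref{sec:blockLDL}, $B_1$ has full rank and $n_2\le n_1$ (by \ref{[C1]}), so $B_1$ has full row rank. Hence $S_{a,1}$ is SPD, and in particular invertible, so $S_{a,2}$ is well defined.

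For $S_{a,2}$, I will avoid expanding the formula \eqref{eq:sca2}. Instead I view it as the Schur complement of the leading $2\times 2$ block of a positive semidefinite matrix. Negate the last two block rows and columns of $\sA$; equivalently, consider
\[
\mathcal{M}=\begin{bmatrix} A_1 & -B_1^T & -B_2^T\\ -B_1 & A_2 & 0\\ -B_2 & 0 & A_3\end{bmatrix}
\]
together with the $2\times 2$ block partition $\{1\},\{2,3\}$. Then $\mathcal{M}/A_1=\mathrm{blkdiag}(A_2,A_3)+\begin{bmatrix}B_1\\B_2\end{bmatrix}A_1^{-1}\begin{bmatrix}B_1^T & B_2^T\end{bmatrix}$, which is the sum of a PSD matrix and a Gram matrix, hence PSD.

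Taking the Schur complement of its $(1,1)$ block, which is exactly $S_{a,1}$ (SPD), gives $A_3+B_2A_1^{-1}B_2^T-B_2A_1^{-1}B_1^TS_{a,1}^{-1}B_1A_1^{-1}B_2^T=S_{a,2}$. By the quotient formula for Schur complements (or directly by a block congruence), $S_{a,2}$ is congruent to a diagonal block of $\mathrm{blkdiag}(S_{a,1},S_{a,2})$, which in turn is congruent to $\mathcal{M}/A_1\succeq 0$. Concretely, for any $y$ set $x=-S_{a,1}^{-1}B_1A_1^{-1}B_2^Ty$. Then $y^TS_{a,2}y=[x;y]^T(\mathcal{M}/A_1)[x;y]\ge 0$, which I would verify by expanding the quadratic form and minimizing over $x$. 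This gives positive semidefiniteness.

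For the equivalence with nonsingularity, I use the factorization \eqref{eq:arrow_ldl}. Since $\sL_a$ is unit lower triangular, $\det\sA=\det A_1\det S_{a,1}\det S_{a,2}$ up to sign, and the first two factors are nonzero. So $\sA$ is nonsingular if and only if $S_{a,2}$ is nonsingular. A PSD matrix is nonsingular if and only if it is positive definite, which gives the final claim.

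The main obstacle is the semidefiniteness of $S_{a,2}$, because the formula contains a subtracted term. The key step is recognizing $S_{a,2}$ as an iterated Schur complement of a PSD matrix, rather than trying to bound the terms of \eqref{eq:sca2} directly. I would double-check the sign bookkeeping in $\mathcal{M}$; the signs of the off-diagonal blocks do not affect the products $B_iA_1^{-1}B_j^T$.
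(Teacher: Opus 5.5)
Your proof is correct in substance, but it takes a different route from the paper.

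\textbf{The paper's route.} It rewrites $S_{a,2}=A_3+B_2A_1^{-1/2}WA_1^{-1/2}B_2^T$ with $W=I-A_1^{-1/2}B_1^TS_{a,1}^{-1}B_1A_1^{-1/2}$. It proves $W\spsd$ by combining the L\"owner-order inversion $S_{a,1}^{-1}\preceq(B_1A_1^{-1}B_1^T)^{-1}$ with the fact that $A_1^{-1/2}B_1^T(B_1A_1^{-1}B_1^T)^{-1}B_1A_1^{-1/2}$ is an orthogonal projector. Definiteness then follows from Sylvester's law applied to \eqref{eq:arrow_ldl}.

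\textbf{Your route.} You identify $S_{a,2}$ as the Schur complement of $S_{a,1}$ inside the PSD matrix $N:=\mathrm{blkdiag}(A_2,A_3)+\begin{bmatrix}B_1\\ B_2\end{bmatrix}A_1^{-1}\begin{bmatrix}B_1^T & B_2^T\end{bmatrix}=-(\sA/A_1)$. This $N$ is the negated trailing block in the paper's first classical partitioning of $\sA$.

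\textbf{Comparison.} Your argument needs only the elementary fact that complementing a positive definite block of a PSD matrix preserves semidefiniteness. It extends directly to arrow matrices with more than two constraint blocks. Your determinant argument also proves both directions of the equivalence, whereas the paper writes out only the forward one. What the paper's route buys is the explicit comparison $S_{a,2}\succeq A_3$, which it reuses in Theorem~\ref{thm:invertibility-suff-arrow}. You can recover this by rerunning your argument with $A_3$ replaced by $0$ in $N$.

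\textbf{A slip to fix.} Your auxiliary $\mathcal{M}$ is wrong. Negating the last two block rows and columns of $\sA$ leaves $-A_2,-A_3$ on the diagonal, so this does not produce your $\mathcal{M}$. For $\mathcal{M}$ as written, the complement of $A_1$ is $\mathrm{blkdiag}(A_2,A_3)-\begin{bmatrix}B_1\\ B_2\end{bmatrix}A_1^{-1}\begin{bmatrix}B_1^T & B_2^T\end{bmatrix}$. This is not PSD in general; take $A_2=A_3=0$. The sign issue you flagged is not in the off-diagonal blocks but in the minus sign of the Schur complement formula. The fix is to use $\mathcal{M}=-\sA$ and complement $-A_1$, or simply to define $N$ directly. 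The expression you wrote for the complement is the correct one, and it is all the rest of your argument uses, so nothing else changes.
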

\begin{proof}
    $A_1$ is symmetric positive definite by  \ref{[C1]}. The positive definiteness of $S_{a,1}$ is immediate to establish as well. It remains  to show that $S_{a,2}$ is symmetric positive semidefinite, and to characterize when it is positive definite.

Since $A_1$ is  symmetric positive definite, its square root $A_1^{1/2}$ exists and is invertible. Let us introduce the symmetric matrices
\begin{equation}
Q = A_1^{-1/2} B_1^\T S_{a,1}^{-1} B_1 A_1^{-1/2} \quad\text{and}\quad W = I - Q.
\end{equation}
Then
\begin{equation*}
S_{a,2} = A_3 + B_2 A_1^{-1/2} W A_1^{-1/2} B_2^\T.
\end{equation*}
We now  show that $W$  is symmetric positive semidefinite.  
Let $$P=A_1^{-1/2} B_1^T (B_1 A_1^{-1} B_1^T)^{-1} B_1 A_1^{-1/2}. $$
The matrix $S_{a,1}$ is symmetric positive definite and $S_{a,1}^{-1} \preceq (B_1 A_1^{-1} B_1^\T)^{-1}$.
We perform left and right multiplication by $A_1^{-1/2} B_1^\T$ and obtain $Q \preceq P$. The matrix $P$ is the orthogonal projector onto $\range(A_1^{-1/2} B_1^\T)$, so $0\preceq P\preceq I$. Therefore $Q\preceq I$ and $W=I-Q\spsd$.

From this it follows that $S_{a,2}$ is positive semidefinite. For invertible $\sA$, $\mathcal{D}_a$ has no zero eigenvalues (by Sylvester's Law of Inertia); it follows that $S_{a,2}$ is positive definite in this case.
\end{proof}

\subsubsection{Block-tridiagonal LDL decomposition}
\label{sec:block-tri-ldl}
For $\sT$, we can write \cite{bg22}
\begin{equation}
		\label{eq:tri_ldl}
		\hspace{-2mm}
		\sT =
        \underbrace{
		\begin{bmatrix}
			I & 0 & 0 \\
			B_1A_1^{-1} & I & 0 \\
			0 & -B_2S_{t,1}^{-1} & I
		\end{bmatrix}}_{=: \sL_t}
		\underbrace{
			\begin{bmatrix}
				A_1 & 0 & 0 \\
				0 & -S_{t,1} & 0 \\
				0 & 0 & S_{t,2}
		\end{bmatrix}}_{=: \sD_t}
        \underbrace{
		\begin{bmatrix}
			I & A_1^{-1}B_1^T & 0 \\
			0 & I & -S_{t,1}^{-1}B_2^T \\
			0 & 0 & I
		\end{bmatrix}}_{=\sL_t^T}.
	\end{equation}
The Schur complements $S_{t,1}$ and $S_{t,2}$ are given by
\begin{subequations}
\begin{gather}
S_{t,1} = A_2 + B_1 A_1^{-1} B_1^T, \\
S_{t,2} = A_3 + B_2S_{t,1}^{-1}B_2^T.
\end{gather}
\end{subequations}
It readily follows from Conditions \ref{[C1]}--\ref{[C3]} that $S_{t,1}$ and $S_{t,2}$ are symmetric positive semidefinite, and are symmetric positive definite if $\sT$ is invertible.

We remark that for tridiagonal multiple saddle-point systems $\sT_n$ (see Equation \eqref{eq:block-tridiag}), we can recursively define the Schur complements
\[
S_{t,i} = A_{i+1} + B_i S_{t,i-1}^{-1} B_i^T,
\]
and the block-lower bidiagonal $L$ factor can similarly be defined easily for large numbers of blocks. These factorizations lead to preconditioners with special properties and eigenvalue bounds; we refer to \cite{pp21,sz18} for details.

\subsubsection{Block-LDL decompositions based on classical partitioning}
\label{sec:classical}

Recall that matrices in $\st$ can be partitioned (after reordering, in the case of $\sT$) into classical saddle-point systems. As such, some results -- including block-LDL factorizations -- can be obtained for the double saddle-point case by using classical saddle-point methods on the partitioned systems. We recall the formula for block-LDL factorization of a classical saddle-point system \cite[Eq. (3.1)]{bgl05}:
\begin{equation}
\label{eq:classical_ldl}
\begin{bmatrix}
    A & B^T \\
    B & -C
\end{bmatrix} = \begin{bmatrix}
    I & 0 \\
    BA^{-1} & I
\end{bmatrix} \begin{bmatrix}
    A & 0 \\
    0 & -S
\end{bmatrix}\begin{bmatrix}
    I & A^{-1}B^T \\
    0& I
\end{bmatrix},
\end{equation}
where $S = C+ BA^{-1}B^T$.

In \cite{BeikBenzi2018}, the authors consider the following two partitionings for the block-arrow form:
\begin{equation}
\label{eq:partitioning}
\sA =
\left[ \begin{array}{c | c  c}
    A_1 & B_1^T & B_2^T \\
    \hline
    B_1 & -A_2 & 0 \\
    B_2 & 0 & -A_3
    \end{array}
\right] =
\left[ \begin{array}{c  c | c}
    A_1 & B_1^T & B_2^T \\
    B_1 & -A_2 & 0 \\
    \hline
    B_2 & 0 & -A_3
    \end{array}
\right].
\end{equation}
In this case, one can obtain  block-LDL decompositions using the classical saddle-point LDL factorization \eqref{eq:classical_ldl}, and these decompositions will be different from \eqref{eq:arrow_ldl}. For the first partitioning in \eqref{eq:partitioning} we get
\[
\sA = \begin{bmatrix}
    I & 0 & 0 \\
    B_1 A_1^{-1} & I & 0 \\
    B_2 A_1^{-1} & 0 & I
\end{bmatrix} \begin{bmatrix}
    I & 0 & 0 \\
    0 & -S_1 & -B_1A_1^{-1}B_2^T \\
    0 & -B_2 A_1^{-1}B_1^T & -S_2
\end{bmatrix}\begin{bmatrix}
    I & A_1^{-1}B_1^T & A_1^{-1}B_2^T \\
    0 & I & 0 \\
    0 & 0 & I
\end{bmatrix},
\]
where $S_1 = A_2 + B_1A_1^{-1}B_1^T$ and $S_2= A_3 + B_2A_1^{-1}B_2^T.$ For the second partitioning in \eqref{eq:partitioning} we get

\[
\sA = \begin{bmatrix}
    I & 0 & 0 \\
    0 & I & 0 \\
    L_{31} & L_{32} & I
\end{bmatrix} \begin{bmatrix}
    A_1 & B_1^T & 0 \\
    B_1 & -A_2 & 0 \\
    0 & 0 & -S_2
\end{bmatrix} \begin{bmatrix}
    I & L_{32}^T & L_{31}^T \\
    0 & I & 0 \\
    0 & 0 & I
\end{bmatrix},
\]
where
\begin{align*}
    S_1 &= A_2 + B_1A_1^{-1}B_1^T; \qquad
    S_2 = A_3+B_2 (A_1^{-1} -A_1^{-1} B_1^T S_1^{-1} B_1 A_1^{-1})B_2^T; \\
    L_{31} &= B_2 A_1^{-1} - B_2A_1^{-1}B_1^T S_1^{-1}B_1A_1^{-1}; \qquad
    L_{32} = B_2 A_1^{-1}B_1^T S_1^{-1}.
\end{align*}

Similarly, we can partition $\sK$ (the block-permuted variant of $\sT$ defined in \eqref{eq:K}) as: 
\begin{equation}
\label{eq:partitioning_tri}
 \sK = \left[ \begin{array}{c c | c}
    A_1 & 0 & B_1^T \\
    0 & A_2 & B_2^T \\
    \hline
    B_1 & B_2 & -A_3
    \end{array}
\right] = \left[ \begin{array}{c | c c}
    A_1 & 0 & B_1^T \\
    \hline
    0 & A_2 & B_2^T \\
    B_1 & B_2 & -A_3
    \end{array}
\right].
\end{equation}
The first partitioning in \eqref{eq:partitioning_tri} gives
\[
\sK = \begin{bmatrix}
    I & 0 & 0 \\
    0 & I & 0 \\
    B_1 A_1^{-1} & B_2^T A_2^{-1} & I
\end{bmatrix} \begin{bmatrix}
    A_1 & 0 & 0 \\
    0 & A_2 & 0 \\
    0 & 0 & -S
\end{bmatrix}\begin{bmatrix}
    I & A_1^{-1}B_1^T & A_1^{-1}B_2 \\
    0 & I & 0 \\
    0 & 0 & I
\end{bmatrix},
\]
where
\[
S = A_3 + B_1A_1^{-1}B_1^T + B_2^T A_2^{-1}B_2.
\]

The second partitioning gives
\[
\sK = \begin{bmatrix}
    I & 0 & 0 \\
    0 & I & 0 \\
    B_1A_1^{-1} & 0 & I
\end{bmatrix} \begin{bmatrix}
    A_1 & 0 & 0 \\
    0 & A_2 & B_2^T \\
    0 & B_2 & -S
\end{bmatrix}\begin{bmatrix}
    I & 0 & A_1^{-1}B_1^T \\
    0 & I & 0 \\
    0 & 0 & I
\end{bmatrix},
\]
where $S = A_3 + B_1 A_1^{-1} B_1^T$.
We note that these Schur complements are different than the ones we obtained in Sections \ref{sec:block-arrow-ldl}--\ref{sec:block-tri-ldl}.  The question of which factorization is more practical for the development of numerical solvers depends on the problem at hand. For example, in the case of PDE-constrained optimization discussed in Section \ref{sec:PDEC} we have that $A_1$ and $A_2$ are both equal to the stiffness matrix $K$, $B_1$ and $B_2$ are scalings of the mass matrix $M$, and $A_3$ is zero. In this case, the Schur complement $S$ that arises from the first block partitioning in \eqref{eq:partitioning_tri} is a scalar multiple of $KM^{-1}K$; this is a comparatively simple term to deal with, and it justifies the use of the partitioning strategy.

\subsection{Invertibility conditions}
\label{sec:inv}

In this section we state necessary and sufficient conditions for invertibility of matrices in $\st$. 

\subsubsection{Block-arrow invertibility conditions}

We begin with a statement of some necessary conditions for invertibility of $\sA$.

\begin{theorem}[$\sA$, necessary conditions for invertibility]
    If $\sA$ is invertible, the following must hold:

    \begin{enumerate}[label=(\roman*)]
        \item $\ker(A_1) \cap \ker(B_1) \cap \ker(B_2) = \{ 0 \}$;
        \item $\ker(B_1^T) \cap \ker(A_2) = \{ 0 \}$;
        \item $\ker(B_2^T) \cap \ker(A_3) = \{ 0 \}$.
    \end{enumerate}
\end{theorem}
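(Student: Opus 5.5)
We argue by contraposition: for each of the three conditions, if the intersection of kernels contains a nonzero vector, we exhibit a nonzero vector in $\ker(\sA)$. Then $\sA$ is singular. No Schur complements or factorizations are needed; only the block structure of $\sA$ in \eqref{eq:2sp} is used.

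For (i), suppose $0 \neq x \in \ker(A_1)\cap\ker(B_1)\cap\ker(B_2)$ and set $v = [x^T,\ 0,\ 0]^T$. Then
\[
\sA v = \begin{bmatrix} A_1 x \\ B_1 x \\ B_2 x \end{bmatrix} = 0 .
\]
This condition is stated for generality. Under \ref{[C1]}, $A_1$ is positive definite, so $\ker(A_1)=\{0\}$ and (i) holds automatically.

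For (ii), suppose $0 \neq y \in \ker(B_1^T)\cap\ker(A_2)$ and set $v = [0,\ y^T,\ 0]^T$. Then
\[
\sA v = \begin{bmatrix} B_1^T y \\ -A_2 y \\ 0 \end{bmatrix} = 0 .
\]
Here the zero $(2,3)$ and $(3,2)$ blocks of the block-arrow form ensure that the third block row contributes nothing.

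For (iii), suppose $0 \neq z \in \ker(B_2^T)\cap\ker(A_3)$ and set $v = [0,\ 0,\ z^T]^T$. Then
\[
\sA v = \begin{bmatrix} B_2^T z \\ 0 \\ -A_3 z \end{bmatrix} = 0 .
\]

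In each case we have a nonzero vector in $\ker(\sA)$, contradicting invertibility. There is no real obstacle here. The only point needing care is to check that the zero off-diagonal blocks of $\sA$ make the unused block rows vanish.
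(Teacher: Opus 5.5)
Your proof is correct and takes the same approach as the paper: both exhibit the null vector $[x^T,\ 0,\ 0]^T$ for (i). The paper dismisses (ii) and (iii) as ``similar reasoning''; you write out those cases explicitly with $[0,\ y^T,\ 0]^T$ and $[0,\ 0,\ z^T]^T$.
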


\begin{proof}
    To prove statement (i), assume to the contrary that  the intersection of kernels is non-empty and there exists a nonzero vector $x$ satisfying $A_1 x = B_1 x = B_2 x = 0$. It would then be the case that the block column vector $[x, 0, 0]$ is a nonzero null vector of $\sA$, which would imply $\sA$ is singular and is a contradiction. Similar reasoning proves (ii) and (iii).
\end{proof}

In general, by \eqref{eq:arrow_ldl}, a sufficient condition for invertibility when Conditions \ref{[C1]}--\ref{[C3]} hold is that $A_1$, $S_{a,1}$ and $S_{a,2}$ are positive definite. If $B_1$ and $B_2$ have full row rank, we can state  alternative sufficient conditions.

\begin{theorem}[$\sA$, sufficient conditions for invertibility]
\label{thm:invertibility-suff-arrow}
Assume $A_1\spd$, $A_2\spsd$, $A_3\spsd$, and $B_1,B_2$ have full row rank. If either
\begin{enumerate}[label={(\roman*)}]
  \item \label{item:A3SPD} $A_3\spd$ \\ 
   or
  \item \label{item:kernels-disjoint} $\range\big(B_1^T\big) \cap \range\big( B_2^T\big) = \{0\}$,
\end{enumerate}
then  $\sA$ is invertible. 
\end{theorem}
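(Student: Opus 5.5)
The plan is to use the block-LDL factorization \eqref{eq:arrow_ldl} together with Proposition \ref{prop:spd}. Since $\sA = \sL_a \sD_a \sL_a^T$ with $\sL_a$ unit block lower triangular, $\sA$ is invertible if and only if $\sD_a$ is invertible. By Proposition \ref{prop:spd}, $A_1$ and $S_{a,1}$ are positive definite; full row rank of $B_1$ is not even needed for $S_{a,1}$, since $B_1A_1^{-1}B_1^T \spsd$ and positive definiteness follows once $B_1$ has full row rank or $A_2 \spd$. It therefore suffices to show $S_{a,2} \spd$ under either hypothesis. Recall the representation from the proof of Proposition \ref{prop:spd}:
\[
S_{a,2} = A_3 + B_2 A_1^{-1/2} W A_1^{-1/2} B_2^T, \qquad W = I - Q \spsd .
\]

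\textbf{Case (i).} If $A_3 \spd$, then $S_{a,2} \succeq A_3 \spd$ because the second term is positive semidefinite. This case is immediate.

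\textbf{Case (ii).} Take $y$ with $y^T S_{a,2} y = 0$. Both terms are positive semidefinite, so $y^T A_3 y = 0$ and $z^T W z = 0$, where $z = A_1^{-1/2} B_2^T y$. Since $W \spsd$, this gives $Wz = 0$, that is, $Qz = z$. The chain $Q \preceq P \preceq I$ then gives
\[
z^T z = z^T Q z \le z^T P z \le z^T z,
\]
so $z^T P z = z^T z$, and since $P$ is an orthogonal projector, $z \in \range(P) = \range(A_1^{-1/2}B_1^T)$. Write $z = A_1^{-1/2}B_1^T w$. Multiplying by $A_1^{1/2}$ gives $B_2^T y = B_1^T w \in \range(B_1^T) \cap \range(B_2^T) = \{0\}$. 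Full row rank of $B_2$ then forces $y = 0$, so $S_{a,2} \spd$.

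\textbf{Main obstacle.} The delicate step is the equality $Qz = z$ combined with $S_{a,1}^{-1} \preceq (B_1A_1^{-1}B_1^T)^{-1}$. The projector argument above only places $z$ in the range; the equality case must be handled carefully when $A_2 \neq 0$. From $z^T Q z = z^T P z$ with $z = A_1^{-1/2}B_1^T w$, setting $u = B_1A_1^{-1}B_1^T w = B_1 A_1^{-1/2} z$, we get
\[
u^T S_{a,1}^{-1} u = u^T (B_1A_1^{-1}B_1^T)^{-1} u .
\]
The difference $(B_1A_1^{-1}B_1^T)^{-1} - S_{a,1}^{-1}$ is positive semidefinite, so $u$ must lie in its kernel. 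That is not needed, however, since the range conclusion already suffices for the intersection argument. The direct route $z \in \range(P)$ avoids the issue entirely.

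As a cross-check, I would also verify case (ii) directly from the kernel. Suppose $\sA [x;y_1;y_2] = 0$. Taking the inner product with $[x;-y_1;-y_2]$ gives
\[
x^T A_1 x + y_1^T A_2 y_1 + y_2^T A_3 y_2 = 0,
\]
so $x = 0$. The first block row then gives $B_1^T y_1 + B_2^T y_2 = 0$. Under (ii), both terms vanish, and full row rank gives $y_1 = y_2 = 0$. This is likely the cleanest proof for (ii), and the same energy argument handles (i) via $A_3 y_2 = 0 \Rightarrow y_2 = 0$, then $B_1^T y_1 = 0 \Rightarrow y_1 = 0$. I would present the energy argument as the main proof and mention the Schur complement view as a remark.
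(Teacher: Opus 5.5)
Your proof is correct. For case (i), your Schur-complement argument ($S_{a,2}\succeq A_3\spd$) is exactly the paper's. For case (ii), you take a different route.

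The paper observes that $\range(B_1^T)\cap\range(B_2^T)=\{0\}$, together with full row rank of $B_1$ and $B_2$, makes $[B_1^T\ B_2^T]^T$ full row rank. It then views $\sA$ as a classical saddle-point matrix with leading block $A_1\spd$ and trailing block $-\mathrm{blkdiag}(A_2,A_3)\preceq 0$, and cites \cite[Theorem 3.1]{bgl05}. Your energy argument is essentially that cited theorem proved in line, so the version you plan to present is self-contained. It needs neither the block-LDL factorization nor Proposition~\ref{prop:spd}. It also does not need the bound $n_2+n_3\le n_1$, which the paper invokes via Lemma~\ref{lem:dimr}. Under (ii) that bound is automatic anyway, since $\range(B_1^T)\oplus\range(B_2^T)\subseteq\mathbb{R}^{n_1}$ has dimension $n_2+n_3$. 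Your argument also gives an exact criterion: $\sA$ is singular if and only if some $(y_1,y_2)\neq 0$ satisfies $A_2y_1=0$, $A_3y_2=0$ and $B_1^Ty_1+B_2^Ty_2=0$. Both (i) and (ii) follow immediately from this.

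Your Schur-complement proof of (ii) is also valid: $Wz=0$ gives $z\in\range(A_1^{-1/2}B_1^T)$ via $Q\preceq P\preceq I$. It keeps the whole argument inside the LDL framework the paper uses for case (i). You can delete the ``main obstacle'' paragraph, since, as you conclude yourself, membership in that range is all the intersection step needs.

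The paper's route buys brevity and an explicit tie to classical saddle-point theory, at the cost of resting on an external result.
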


\begin{proof}
Given the positive definiteness of $A_1$ and $S_{a,1},$ invertibility of $\sA$ boils down to invertibility of $S_{a,2}$, per \eqref{eq:arrow_ldl}. Because $S_{a,2} = A_3 + B_2 A_1^{-1/2}WA_1^{-1/2}B_2^T$, where $W$ is as defined in the proof of Proposition \ref{prop:spd} and was shown there to be positive semidefinite. We thus have $S_{a,2} \succeq A_3$. Therefore, if $A_3$ is SPD then so is $S_{a,2}$, which establishes condition \ref{item:A3SPD} as a sufficient condition for invertibility of $\sA$.

If condition \ref{item:kernels-disjoint} is satisfied, the row spaces of $B_1$ and $B_2$ are non-intersecting, and thus the rows of the matrix $\begin{bmatrix} B_1 \\ B_2 \end{bmatrix}$ are linearly independent, implying that the has full row rank, by the requirement that $n_2 + n_3 \le n_1$ (Lemma \ref{lem:dimr}). Therefore, we can use the classical saddle-point partitioning of $\sA$ with $A_1$ as its leading block (see \eqref{eq:2sp}) and conclude that $\sA$ is invertible, by \cite[Theorem 3.1]{bgl05}.
\end{proof}

\subsubsection{Block-tridiagonal invertibility conditions}

For necessary conditions for invertibility of $\sT$, we recall \cite[Proposition 2.1]{bg22}:

\begin{theorem}[$\sT$, necessary conditions for invertibility]
    If $\sT$ is invertible, the following must hold:
    \begin{enumerate}[label=(\roman*)]
        \item $\ker(A_1) \cap \ker(B_1) = \{ 0 \}$;
        \item $\ker(B_1^T) \cap \ker(A_2) \cap \ker(B_2) = \{ 0 \}$;
        \item $\ker(B_2^T) \cap \ker(A_3) = \{ 0 \}$.
    \end{enumerate}
\end{theorem}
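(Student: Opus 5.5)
The plan is to argue by contraposition, as in the proof of the corresponding result for $\sA$. If one of the three intersections contains a nonzero vector, we exhibit a nonzero null vector of $\sT$; this contradicts invertibility. The only work is choosing the null vector so that it respects the block-tridiagonal coupling pattern of $\sT$.

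For (i), suppose $x\neq 0$ satisfies $A_1x=0$ and $B_1x=0$. Take the block vector $v=[x;0;0]$. Then
$$\sT v = [A_1x;\,B_1x;\,0]=0.$$
The third block vanishes because the (3,1)-block of $\sT$ is zero, which is exactly why $B_2$ does not enter condition (i), in contrast to the arrow case.

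For (ii), suppose $y\neq 0$ lies in $\ker(B_1^T)\cap\ker(A_2)\cap\ker(B_2)$. Take $v=[0;y;0]$. Then
$$\sT v=[B_1^Ty;\,-A_2y;\,B_2y]=0.$$
The middle block variable couples to both neighbours, so all three kernel conditions are needed.

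For (iii), suppose $z\neq 0$ lies in $\ker(B_2^T)\cap\ker(A_3)$. Take $v=[0;0;z]$. Then
$$\sT v=[0;\,B_2^Tz;\,A_3z]=0.$$

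In each case $\sT$ has a nonzero kernel and is therefore singular, which is the desired contradiction. There is no real obstacle. The one point requiring care is reading off which off-diagonal blocks multiply each block component: $B_1^T$ and $0$ in the first block row, $B_1$ and $B_2^T$ in the second, and $0$ and $B_2$ in the third. This pattern determines precisely which kernels appear in each condition.
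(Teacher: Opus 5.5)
Your proof is correct and takes essentially the same approach as the paper. The paper recalls this result from \cite{bg22} without a proof of its own, but it proves the block-arrow analogue by exactly this construction: a nonzero null vector supported on a single block, which contradicts invertibility. Your three computations of $\sT v$ correctly track the block-tridiagonal coupling pattern.
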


As in the block-arrow case, a sufficient condition for invertibility when  Conditions \ref{[C1]} -- \ref{[C3]} hold is that $A_1$, $S_{t,1}$ and $S_{t,2}$ are positive definite; see \eqref{eq:tri_ldl}. We can also state the following sufficient condition when $B_1$, $B_2$ have full row rank.

\begin{theorem}[$\sT$, sufficient conditions for invertibility]
    Assume $A_1\spd$, $A_2\spsd$, $A_3\spsd$, and $B_1,B_2$ have full row rank. Assume in addition that the dimension restrictions of $\sT$ stated in Lemma \ref{lem:dimr} hold. Then $\sT$ is invertible.
\end{theorem}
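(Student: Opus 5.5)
We will prove invertibility through the block-LDL factorization \eqref{eq:tri_ldl}. Since $\sL_t$ is unit block lower triangular and therefore nonsingular, $\sT$ is invertible exactly when $\sD_t = \mathrm{blkdiag}(A_1, -S_{t,1}, S_{t,2})$ is invertible. It thus suffices to show that $A_1$, $S_{t,1}$ and $S_{t,2}$ are all symmetric positive definite. This is the route the paper describes as sufficient just before the statement.

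The first two blocks are straightforward. $A_1$ is SPD by hypothesis. For $S_{t,1} = A_2 + B_1 A_1^{-1} B_1^T$ we take $y \neq 0$ and write
\[
y^T S_{t,1} y = y^T A_2 y + (B_1^T y)^T A_1^{-1} (B_1^T y).
\]
The first term is nonnegative because $A_2 \spsd$. Because $B_1$ has full row rank, $B_1^T y \neq 0$, and since $A_1^{-1} \spd$ the second term is strictly positive. Hence $S_{t,1} \spd$, and in particular $S_{t,1}^{-1}$ exists and is SPD, so $S_{t,2}$ is well defined.

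For $S_{t,2} = A_3 + B_2 S_{t,1}^{-1} B_2^T$ we argue the same way. For $z \neq 0$, the term $z^T A_3 z$ is nonnegative. The second term $(B_2^T z)^T S_{t,1}^{-1} (B_2^T z)$ is strictly positive as long as $B_2^T z \neq 0$, which holds because $B_2$ has full row rank. So $S_{t,2} \spd$, $\sD_t$ is nonsingular, and $\sT = \sL_t \sD_t \sL_t^T$ is invertible.

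The only delicate point is making sure the full-row-rank hypotheses are meaningful. A full row rank $B_2 \in \mathbb{R}^{n_3 \times n_2}$ requires $n_3 \le n_2$, and a full row rank $B_1 \in \mathbb{R}^{n_2 \times n_1}$ requires $n_2 \le n_1$. These are exactly the dimension restrictions $n_1 \ge n_2 \ge n_3$ of Lemma \ref{lem:dimr}, which guarantee that $B_1^T$ and $B_2^T$ have trivial kernels. With that in place, the rest is the routine quadratic-form argument above. No assumption on the definiteness of $A_2$ or $A_3$ beyond semidefiniteness is needed.
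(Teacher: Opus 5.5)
Your proposal is correct and follows essentially the same route as the paper: use the full-row-rank hypotheses on $B_1$ and $B_2$ to show that $A_1$, $S_{t,1}$, and $S_{t,2}$ are symmetric positive definite, then conclude invertibility from the block-LDL factorization \eqref{eq:tri_ldl}. The paper finishes by citing Sylvester's Law of Inertia where you argue directly that $\sL_t$ is nonsingular, and your remark that the dimension restrictions already follow from full row rank makes explicit something the paper leaves implicit.
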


\begin{proof}
     If $A_1$ is positive definite, then $A_1^{-1}$ is well-defined; if $B_1$ has full row rank and $n_2 \le n_1$, then the first Schur complement $S_{a,1} = A_2 + B_1 A_1^{-1}B_1^T$ is positive definite. Then, $S_{a,1}^{-1}$ is well-defined and the second Schur complement $S_{a,2} = A_3 + B_2 S_{a,1}^{-1}B_2^T$ is positive definite (by the assumptions that $B_2$ has full row rank and $n_3 \le n_2$). The stated result then follows from the block-LDL decomposition of $\sT$ \eqref{eq:tri_ldl} and Sylvester's Law of Inertia.
\end{proof}

For additional details on invertibility conditions for $\sT$, we refer to \cite{bgt24}.

\subsection{Eigenvalue bounds}

In this subsection we collect eigenvalue bounds for matrices in $\st$. Both block-arrow and block-tridiagonal forms are indefinite, following directly from the block-LDL decompositions derived in Section \ref{sec:blockLDL}. We state the following result on inertia, which will prove useful for establishing some later eigenvalue bounds. The results follow from the respective block-LDL decompositions of $\sA$ and $\sT$ along with Sylvester's Law of Inertia.

\begin{lemma}[Double saddle-point matrix inertia]
    If the block-arrow matrix $\sA$ is nonsingular, it has $n_1$ positive and $n_2+n_3$ negative eigenvalues. If the block-tridiagonal matrix $\sT$ is nonsingular, it has $n_1+n_3$ positive and $n_2$ negative eigenvalues.
    \label{lem:inertia}
\end{lemma}

Accordingly, we will seek upper and lower bounds on the positive and negative eigenvalues of these matrices. There has been extensive spectral analysis of the block-tridiagonal matrix $\sT$ in the literature, whereas analogous results for $\sA$ are scarcer; we therefore derive some such results in this paper. 

\paragraph{Notation} The spectral bounds derived in this section will depend on the eigenvalues of the diagonal blocks $A_1$, $A_2$, $A_3$ and the singular values of the off-diagonal blocks $B_1, B_2$. We will denote eigenvalues by $\mu$ and use the subscripts 1, 2, or 3 to denote eigenvalues of $A_1, A_2$, and $A_3$, respectively. We will denote singular values by $\sigma$ and use the subscript 1 or 2 to respectively denote singular values of $B_1$ and $B_2$. The bounds generally rely on the smallest and/or largest-magnitude singular and eigenvalues, so we will use the superscripts ``min'' and ``max'' to respectively denote the smallest and largest magnitude values. See Table \ref{tab:notation}.

\begin{table}
 \begin{center}
 \begin{tabular}{|c|c|c|c|}
 \hline
 matrix & type  & $\max$ & $\min$ \\
 \hline \hline
 $A_1$ & eigenvalues  &   $\mu_1^{\max}$ & $\mu_1^{\min}$  \\ \hline
  $A_2$ & eigenvalues & $\mu_2^{\max}$ & $\mu_2^{\min}$  \\ \hline
 $A_3$ & eigenvalues & $\mu_3^{\max}$ & $\mu_3^{\min}$  \\ \hline
 $B_1$ & singular values & $\sigma_{1}^{\max}$ &  $\sigma_{1}^{\min}$ \\ \hline
 $B_2$ & singular values & $\sigma_{2}^{\max}$ &  $\sigma_{2}^{\min}$ \\ \hline
 \end{tabular}
 \end{center}
 \caption{Summary of eigenvalues and singular values of blocks comprising matrices in $\st$}
 \label{tab:notation}
\end{table}

\subsubsection{Spectral bounds on block matrices} We begin by describing two related techniques -- energy estimates, and a technique we call the R-matrix method \cite[Chapter 1.4]{BradleyThesis2022} -- for deriving bounds on the eigenvalues of symmetric block matrices. In general, extremal bounds (upper bounds on positive eigenvalues and lower bounds on negative eigenvalues) are easier to compute than interior bounds (lower bounds on positive eigenvalues and upper bounds on negative eigenvalues).

As the setting for these methods, consider a symmetric block-$k \times k$ matrix
$$
\mathcal{M} = \begin{bmatrix}
	M_{11} & M_{21}^T &  \cdots & M_{k1}^T \\
	M_{21} & M_{22} & & M_{k2}^T\\
	\vdots & & \ddots &  \vdots \\
	M_{k1} & M_{k2} & \cdots  & M_{kk}
\end{bmatrix},
$$
and let $v = \begin{bmatrix} x_1^T & x_2^T & \cdots & x_k^T \end{bmatrix}^T$ be an appropriately partitioned block vector. Both  energy estimates and R-matrix techniques attempt to bound the quadratic form  $v^T \mathcal{M} v$, using information about the eigenvalues/singular values of the constituent blocks $M_{ij}$ of $\mathcal{M}$. Energy estimates -- as used in, for example, Rusten and Winther \cite{rw92} -- involve writing the the eigenvalue equations for $\mathcal{M}$:
\begin{align*}
	M_{11}x_1 + M_{21}^T x_2 &+ \ldots + M_{k1}^T x_k = \lambda x_1 ,\\
	&\ \ \vdots \\
	M_{k1}x_1 + M_{k2} x_2 &+ \ldots + M_{kk} x_k = \lambda x_k,
\end{align*}
and then performing various manipulations on the eigenvalue equations to obtain an inequality involving $\lambda$. The advantage of energy estimates is their generality: they can be used to obtain both extremal and interior eigenvalue bounds, though the process is generally more difficult for interior bounds. The disadvantage is that the process is open-ended and difficult to scale to larger block systems.

The R-matrix technique is only applicable for extremal bounds but is simpler to apply. We begin with the following expression for $v^T \mathcal{M} v$:

\begin{equation}
	\label{eqn:vtmv}
	v^T \mathcal{M} v = \sum_{i=1}^{k} \sum_{j=1}^{k} x_i^T M_{ij}x_j.
\end{equation}
Letting $\lambda_{ii}^{\max}$ and $\lambda_{ii}^{\min}$ respectively denote the minimal and maximal eigenvalues of $M_{ii}$ and $\sigma_{ij}^{\max}$ the maximal singular value of $M_{ij}$, we can bound each term of \eqref{eqn:vtmv} by
\begin{align*}
	\lambda_{ii}^{\min} ||x_i||^2 &\le x_i^T M_{ij}x_j \le \lambda_{ii}^{\max} ||x_i||^2 \qquad \qquad \textrm{if } i=j; \\
    -\sigma_{ij}^{\max} ||x_i||\cdot ||x_j|| &\le x_i^T M_{ij}x_j \le \sigma_{ij}^{\max} ||x_i||\cdot ||x_j|| \qquad \textrm{if } i\ne j,
\end{align*}
with the second pair of inequalities holding as a result of the Cauchy-Schwarz inequality. We can then bound \eqref{eqn:vtmv} from above by
\begin{equation*}
	v^T \mathcal{M} v \le \begin{bmatrix} ||x_1|| & ||x_2|| & \cdots & ||x_k|| \end{bmatrix} \underbrace{\begin{bmatrix}
			\lambda_{11}^{\max}& \sigma_{12}^{\max} &  \cdots & \sigma_{1k}^{\max} \\
			\sigma_{12}^{\max} & \lambda_{22}^{\max} & & \sigma_{2k}^{\max}\\
			\vdots & & \ddots &  \vdots \\
			\sigma_{1k}^{\max} & \sigma_{2k}^{\max} & \cdots  & \lambda_{kk}^{\max}
	\end{bmatrix}}_{=: R^+} \begin{bmatrix}
		||x_1|| \\
		||x_2|| \\
		\vdots \\
		||x_k||
	\end{bmatrix}.
\end{equation*}
An upper bound on $\frac{v^T \mathcal{M} v}{v^T v}$ -- and, therefore, an upper bound on the eigenvalues of $\mathcal{M}$ -- is given by the maximal eigenvalue of $R^+$, which we call the ``reduced matrix'' because it is $k \times k$ (instead of block-$k \times k$). For a block-$k \times k$ matrix, this gives us a bound that is the root of a degree-$k$ polynomial (specifically, the characteristic polynomial of $R^+$). We refer to \cite{BradleyThesis2022,bg22,sz18} for examples of this technique for classical, (block-tridiagonal) double and multiple saddle-point systems.

By similar reasoning, for a lower bound on negative eigenvalues, we can write
\begin{equation*}
	v^T \mathcal{M} v \ge \begin{bmatrix} ||x_1|| & ||x_2|| & \cdots & ||x_k|| \end{bmatrix} \underbrace{\begin{bmatrix}
			\lambda_{11}^{\min}& -\sigma_{12}^{\max} &  \cdots & -\sigma_{1k}^{\max} \\
			-\sigma_{12}^{\max} & \lambda_{22}^{\min} & & -\sigma_{2k}^{\max}\\
			\vdots & & \ddots &  \vdots \\
			-\sigma_{1k}^{\max} & -\sigma_{2k}^{\max} & \cdots  & \lambda_{kk}^{\min}
	\end{bmatrix}}_{=: R^-} \begin{bmatrix}
		||x_1|| \\
		||x_2|| \\
		\vdots \\
		||x_k||
	\end{bmatrix}.
\end{equation*}
In this case, the smallest/most negative eigenvalue of $R^{-}$ gives a lower bound on the negative eigenvalues of $\mathcal{M}$.

\subsubsection{Block-arrow eigenvalue bounds}

Because the block-arrow system is partitionable into a classical saddle-point system, we recall the following result \cite[Lemma 2.2]{sw94}.

\begin{lemma}[Eigenvalue bounds, regularized classical saddle-point systems]
    Let
    \[
    \mathcal{M} = \begin{bmatrix}
        A & B^T \\
        B & -C
    \end{bmatrix}
    \]
    denote a classical saddle-point matrix, where the eigenvalues of $A$ are between $\mu_A^{\min} >0$ and $\mu_A^{\max}$, the singular values of $B$ are between $\sigma_B^{\min} > 0$ and $\sigma_B^{\max}$, and the eigenvalues of $C$ are between $\mu_C^{\min} \ge 0$ and $\mu_C^{\max}$. Then the eigenvalues of $\mathcal{M}$ are bounded within the intervals
    {\small
    \begin{align*}
        &\left[ \frac12 \left( \mu_A^{\min} -\mu_C^{\max} - \sqrt{(\mu_A^{\min} + \mu_C^{\max})^2 + 4 (\sigma_B^{\max})^2} \right),\frac12\left( \mu_A^{\max} - \sqrt{(\mu_A^{\max})^2 + 4 (\sigma_B^{\min})^2 } \right) \right]\\
        &\qquad \qquad \qquad\cup \left[ \mu_A^{\min}, \frac12 \left( \mu_A^{\max} + \sqrt{(\mu_A^{\max})^2 + 4(\sigma_B^{\max})^2} \right) \right].
    \end{align*}}
    \label{lem:sp-eig-bounds}
\end{lemma}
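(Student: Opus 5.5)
We prove the four bounds separately by energy estimates on the eigenvalue equations. Let $\lambda$ be an eigenvalue of $\mathcal{M}$ with eigenvector $[x;y]\neq 0$:
\begin{equation*}
Ax + B^Ty = \lambda x, \qquad Bx - Cy = \lambda y.
\end{equation*}
Two facts are used repeatedly. First, since $\mu_A^{\min}>0$ and $C\succeq 0$, $\mathcal{M}$ is nonsingular with inertia $(n,m,0)$, by the LDL factorization \eqref{eq:classical_ldl} and Sylvester's law. Second, $\sigma_B^{\min}>0$ means $B$ has full row rank, so $\|B^Ty\|\ge\sigma_B^{\min}\|y\|$.

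\textbf{Positive eigenvalues.} For the upper bound, the R-matrix technique applies directly. Bound $v^T\mathcal{M}v$ above by the quadratic form of
\begin{equation*}
R^+=\begin{bmatrix}\mu_A^{\max}&\sigma_B^{\max}\\ \sigma_B^{\max}&0\end{bmatrix},
\end{equation*}
where we drop $-y^TCy\le 0$. Its largest eigenvalue is $\tfrac12\bigl(\mu_A^{\max}+\sqrt{(\mu_A^{\max})^2+4(\sigma_B^{\max})^2}\bigr)$.

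For the lower bound, take $\lambda>0$. Then $\lambda+C$ is SPD, so $y=(\lambda I+C)^{-1}Bx$, and $x\ne0$ (otherwise $y=0$ as well). Substitute into the first equation and multiply by $x^T$:
\begin{equation*}
\lambda\|x\|^2 = x^TAx + x^TB^T(\lambda I+C)^{-1}Bx \ge \mu_A^{\min}\|x\|^2 .
\end{equation*}
This gives $\lambda\ge\mu_A^{\min}$.

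\textbf{Negative eigenvalues.} Take $\lambda<0$. Since $A-\lambda I\succ0$, we have $x=(\lambda I-A)^{-1}B^Ty$ with $y\neq 0$. Substitute into the second equation and take the inner product with $y$:
\begin{equation*}
\lambda\|y\|^2 = -\,y^TCy - y^TB(A-\lambda I)^{-1}B^Ty .
\end{equation*}

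For the lower bound, use $(A-\lambda I)^{-1}\preceq (\mu_A^{\min}-\lambda)^{-1}I$ and $y^TCy\le\mu_C^{\max}\|y\|^2$. This gives
\begin{equation*}
\lambda\ge-\mu_C^{\max}-\frac{(\sigma_B^{\max})^2}{\mu_A^{\min}-\lambda}.
\end{equation*}
Multiplying by the positive quantity $\mu_A^{\min}-\lambda$ yields
\begin{equation*}
\lambda^2-(\mu_A^{\min}-\mu_C^{\max})\lambda-\mu_A^{\min}\mu_C^{\max}-(\sigma_B^{\max})^2\le 0.
\end{equation*}
Hence $\lambda$ is at least the smaller root, which is the stated left endpoint. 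The discriminant simplifies because $(\mu_A^{\min}-\mu_C^{\max})^2+4\mu_A^{\min}\mu_C^{\max}=(\mu_A^{\min}+\mu_C^{\max})^2$.

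For the upper bound, drop $-y^TCy\le0$ and use $(A-\lambda I)^{-1}\succeq(\mu_A^{\max}-\lambda)^{-1}I$ together with $\|B^Ty\|^2\ge(\sigma_B^{\min})^2\|y\|^2$. This gives
\begin{equation*}
\lambda\le-\frac{(\sigma_B^{\min})^2}{\mu_A^{\max}-\lambda},
\end{equation*}
i.e.
\begin{equation*}
\lambda^2-\mu_A^{\max}\lambda-(\sigma_B^{\min})^2\le0 .
\end{equation*}
Together with $\lambda<0$, this places $\lambda$ between the negative root and $0$. I expect this step to be the main obstacle. One has to check that the inequality direction is preserved when multiplying by the positive factor $\mu_A^{\max}-\lambda$, and that rearranging yields $\lambda\le\tfrac12\bigl(\mu_A^{\max}-\sqrt{(\mu_A^{\max})^2+4(\sigma_B^{\min})^2}\bigr)$ rather than merely membership between the two roots. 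The quadratic bounds the magnitude of $\lambda$ from both sides, so we must combine it with the sign information $\lambda<0$ to pick out the correct endpoint.
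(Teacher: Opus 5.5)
Your overall route is sound. You use the R-matrix bound for the top of the spectrum and energy estimates on the eigenvalue equations for the other three bounds, which is the standard Rusten--Winther/Silvester--Wathen argument; the paper gives no proof of its own and only cites \cite[Lemma 2.2]{sw94}. Your upper and lower bounds on the positive eigenvalues are correct, and so is your lower bound on the negative eigenvalues.

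The failure is in the final step, exactly where you sensed trouble. Multiply $\lambda\le-(\sigma_B^{\min})^2/(\mu_A^{\max}-\lambda)$ by $\mu_A^{\max}-\lambda>0$. This gives $\mu_A^{\max}\lambda-\lambda^2\le-(\sigma_B^{\min})^2$, that is,
\[
\lambda^2-\mu_A^{\max}\lambda-(\sigma_B^{\min})^2\ \ge\ 0,
\]
not $\le 0$. The inequality you wrote is false in general. For $1\times1$ blocks $A=B=C=1$ the negative eigenvalue is $-\sqrt2$, and $(-\sqrt2)^2+\sqrt2-1>0$. Even taken at face value, intersecting your inequality with $\lambda<0$ gives $r_-\le\lambda<0$. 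That is a lower bound, the opposite of what the lemma asserts.

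With the correct sign the ``obstacle'' disappears. The roots are
\[
r_\pm=\tfrac12\bigl(\mu_A^{\max}\pm\sqrt{(\mu_A^{\max})^2+4(\sigma_B^{\min})^2}\bigr),
\]
and their product is $-(\sigma_B^{\min})^2<0$, so $r_-<0<r_+$. The corrected inequality says $\lambda\le r_-$ or $\lambda\ge r_+$. Since $\lambda<0$, only $\lambda\le r_-$ remains, which is exactly the right endpoint of the negative interval. The quadratic gives a one-sided condition (outside the roots), not a two-sided bound on $|\lambda|$, so drop that remark.

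A minor point concerns nonsingularity of $\mathcal{M}$, which you need to rule out $\lambda=0$. It requires $S=C+BA^{-1}B^T\succ 0$, and hence $\sigma_B^{\min}>0$, not just $A\succ0$ and $C\succeq0$. Alternatively, your negative-eigenvalue argument runs unchanged at $\lambda=0$ and yields $0\le-(\sigma_B^{\min})^2/\mu_A^{\max}$, a contradiction.
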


We also define the following cubic polynomials

\begin{subequations}
	\begin{align}
    \begin{split}
		p(\lambda) =  
        &\lambda^3 - (\mu_1^{\max} - \mu_2^{\min} - \mu_3^{\min}) \lambda^2\\& - \left(\mu_1^{\max}(\mu_2^{\min} + \mu_3^{\min})- \mu_2^{\min}\mu_3^{\min}+ (\sigma_1^{\max})^2+ (\sigma_2^{\max})^2\right)\lambda\\
&- \left(
\mu_1^{\max}\mu_2^{\min}\mu_3^{\min}
+ (\sigma_1^{\max})^2 \mu_3^{\min}
+ (\sigma_2^{\max})^2 \mu_2^{\min}
\right);
\end{split}
		\label{eq:p}
	\end{align}
\begin{align}
\begin{split}
q(\lambda) =
&\lambda^3
- (\mu_1^{\min} - \mu_2^{\max} - \mu_3^{\max}) \lambda^2 \\
&- \left(
\mu_1^{\min}(\mu_2^{\max} + \mu_3^{\max})
- \mu_2^{\max}\mu_3^{\max}
+ (\sigma_1^{\max})^2
+ (\sigma_2^{\max})^2
\right)\lambda \\
&- \left(
\mu_1^{\min}\mu_2^{\max}\mu_3^{\max}
+ (\sigma_1^{\max})^2 \mu_3^{\max}
+ (\sigma_2^{\max})^2 \mu_2^{\max}
\right).
\end{split}
\label{eq:q}
	\end{align}
	\label{eq:pq}
\end{subequations}

We note that $p(\lambda)$ is the (negated) characteristic polynomial of
\begin{equation}
R_U = \begin{bmatrix}
    \mu_1^{\max} & \sigma_1^{\max} & \sigma_2^{\max} \\
    \sigma_1^{\max} & -\mu_2^{\min} & 0 \\
    \sigma_2^{\max} & 0 & -\mu_3^{\min}
    \end{bmatrix},
\label{eq:U}
\end{equation}
while $q(\lambda)$ is the negated characteristic polynomial of 
\begin{equation}
    R_L = \begin{bmatrix}
    \mu_1^{\min} & -\sigma_1^{\max} & -\sigma_2^{\max} \\
    -\sigma_1^{\max} & -\mu_2^{\max} & 0 \\
    -\sigma_2^{\max} & 0 & -\mu_3^{\max}
\end{bmatrix}.
\label{eq:L}
\end{equation}
These are both block-arrow matrices with $n_1=n_2=n_3 = 1$; provided that both matrices are invertible, the inertia result of Lemma \ref{lem:inertia} holds, implying that these matrices have one positive and two negative eigenvalues. This implies that $p(\lambda)$ and $q(\lambda)$ each have one positive real root and two negative real roots.

We can now state the eigenvalue bounds for $\sA$. We assume below that the matrix $\begin{bmatrix} B_1 \\ B_2 \end{bmatrix}$ has full row rank, which is equivalent to requiring that $\range(B_1^T) \cap \range(B_2^T) = \{ 0\}$.

\begin{theorem}[Eigenvalue bounds for $\sA$]
    Assume that $B_1$ and $B_2$ have full row rank and that $\range(B_1^T) \cap \range(B_2^T) = \{ 0\}$. The eigenvalues of $\sA$ are bounded within the intervals
    \[
    \left[ q_{\min}^{-}, \frac12\left( \mu_1^{\max} - \sqrt{(\mu_1^{\max})^2 +4\left( (\sigma_1^{\min})^2 +(\sigma_2^{\min})^2\right)} \right) \right] \cup \left[ \mu_1^{\min}, p^+ \right],
    \]
    where $q_{\min}^{-}$ denotes the larger-magnitude negative root of $q(\lambda)$ \eqref{eq:q} and $p^+$ the (single) positive root of $p(\lambda)$ \eqref{eq:p}.
    \label{thm:sa_eig}
\end{theorem}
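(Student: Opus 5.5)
The bounds split into four pieces: two interior bounds taken from Lemma \ref{lem:sp-eig-bounds}, and two extremal bounds derived with the R-matrix technique. By Lemma \ref{lem:inertia}, $\sA$ has $n_1$ positive and $n_2+n_3$ negative eigenvalues, so the four pieces cover the whole spectrum.

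\textbf{Interior bounds.} I would partition $\sA$ in the classical way, $A=A_1$, $B=\begin{bmatrix}B_1\\B_2\end{bmatrix}$ and $C=\mathrm{blkdiag}(A_2,A_3)\spsd$. Since $\range(B_1^T)\cap\range(B_2^T)=\{0\}$, $B$ has full row rank and $\sigma_B^{\min}>0$, so Lemma \ref{lem:sp-eig-bounds} applies.
\begin{itemize}
\item The lower bound $\mu_1^{\min}$ on positive eigenvalues comes directly from the lemma.
\item The upper bound on negative eigenvalues is $\frac12(\mu_1^{\max}-\sqrt{(\mu_1^{\max})^2+4(\sigma_B^{\min})^2})$. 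This expression is decreasing in $\sigma_B^{\min}$, so it suffices to show $(\sigma_B^{\min})^2\ge(\sigma_1^{\min})^2+(\sigma_2^{\min})^2$.
\end{itemize}
The natural attempt uses $BB^T=\begin{bmatrix}B_1B_1^T & B_1B_2^T\\ B_2B_1^T & B_2B_2^T\end{bmatrix}$: for $y=(y_1,y_2)$, $y^TBB^Ty=\|B_1^Ty_1+B_2^Ty_2\|^2$. This is the step I expect to be the main obstacle. Without orthogonality of the two row spaces the cross term can be negative, so the clean inequality $\lambda_{\min}(BB^T)\ge(\sigma_1^{\min})^2+(\sigma_2^{\min})^2$ does not follow.

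I would therefore first try an energy estimate in place of the lemma for this bound. For a negative eigenvalue $\lambda$:
\begin{itemize}
\item Eliminate $x_2=(A_2-\lambda I)^{-1}B_1x_1$ and $x_3=(A_3-\lambda I)^{-1}B_2x_1$, using that $A_i-\lambda I\spd$.
\item Obtain $x_1^T(A_1-\lambda I)x_1+x_1^TB_1^T(A_2-\lambda I)^{-1}B_1x_1+x_1^TB_2^T(A_3-\lambda I)^{-1}B_2x_1=0$.
\item Work componentwise on the constraint equations, testing each against $x_2$ and $x_3$ separately. The aim is to bound $\|B_1^Tx_2\|\ge\sigma_1^{\min}\|x_2\|$ and $\|B_2^Tx_3\|\ge\sigma_2^{\min}\|x_3\|$ individually.
\item Combine these to get $\lambda^2-\mu_1^{\max}\lambda-((\sigma_1^{\min})^2+(\sigma_2^{\min})^2)\ge0$.
\end{itemize}
If the bound has to pass through a single norm, I would additionally use the hypothesis that the row spaces intersect trivially. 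If the estimates resist, I would fall back on the weaker $\sigma_B^{\min}$ bound and note that the stated form needs the orthogonality-type argument.

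\textbf{Extremal bounds.} For the upper bound on positive eigenvalues I would apply the R-matrix technique to the block-$3\times3$ form with $v=(x_1,x_2,x_3)$:
\[
v^T\sA v\le \mu_1^{\max}\|x_1\|^2+2\sigma_1^{\max}\|x_1\|\|x_2\|+2\sigma_2^{\max}\|x_1\|\|x_3\|-\mu_2^{\min}\|x_2\|^2-\mu_3^{\min}\|x_3\|^2 .
\]
The right-hand side is the quadratic form of $R_U$ in \eqref{eq:U} evaluated on the vector of norms. Hence every eigenvalue of $\sA$ is at most $\lambda_{\max}(R_U)$, which is the unique positive root $p^+$ of $p$.

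For the lower bound on negative eigenvalues I would use the symmetric estimate with $\mu_1^{\min}$, $-\mu_2^{\max}$, $-\mu_3^{\max}$ and negated off-diagonals. This bounds $v^T\sA v$ below by the quadratic form of $R_L$ in \eqref{eq:L}, giving $\lambda\ge\lambda_{\min}(R_L)=q^-_{\min}$.
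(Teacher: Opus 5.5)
Your extremal bounds via $R_U$ and $R_L$ match the paper's argument and are correct. So does the lower bound $\mu_1^{\min}$ on the positive eigenvalues, obtained from Lemma~\ref{lem:sp-eig-bounds} with $C=\mathrm{blkdiag}(A_2,A_3)$. The step you flagged is where the argument breaks, and no technique can repair it: the claimed upper bound on the negative eigenvalues is false, even when the row spaces of $B_1$ and $B_2$ are orthogonal.

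Take $n_1=2$, $n_2=n_3=1$, $A_1=I_2$, $A_2=A_3=0$, $B_1=\begin{bmatrix}1&0\end{bmatrix}$, $B_2=\begin{bmatrix}0&1\end{bmatrix}$. All hypotheses hold and $\begin{bmatrix}B_1\\B_2\end{bmatrix}=I_2$. The eigenvalue equations $x+y=\lambda x$, $x=\lambda y$ give $\lambda^2-\lambda-1=0$, so $\sA$ has the double negative eigenvalue $\frac{1-\sqrt5}{2}\approx-0.618$. The theorem instead asserts that every negative eigenvalue is at most $\frac12\bigl(1-\sqrt{1+4(1+1)}\bigr)=-1$. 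Here also $q^-_{\min}=-1$, so the claimed negative interval collapses to the single point $\{-1\}$. The paper's proof relies on exactly the inequality you could not establish: it asserts that every singular value of $\begin{bmatrix}B_1\\B_2\end{bmatrix}$ is at least $\sqrt{(\sigma_1^{\min})^2+(\sigma_2^{\min})^2}$. That fails in this example, since $I_2$ has singular values $1<\sqrt2$.

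Consequently your energy-estimate plan cannot reach $\lambda^2-\mu_1^{\max}\lambda-((\sigma_1^{\min})^2+(\sigma_2^{\min})^2)\ge0$. Bounding the two constraint blocks separately only yields $(\sigma_1^{\min})^2\|x_2\|^2+(\sigma_2^{\min})^2\|x_3\|^2\ge\min\{(\sigma_1^{\min})^2,(\sigma_2^{\min})^2\}\,(\|x_2\|^2+\|x_3\|^2)$, which is a minimum, not a sum. Orthogonality ($B_1B_2^T=0$) does not rescue the sum: it makes $BB^T=\mathrm{blkdiag}(B_1B_1^T,B_2B_2^T)$, so $\sigma_B^{\min}=\min\{\sigma_1^{\min},\sigma_2^{\min}\}$.

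Under the stated hypothesis of merely trivial intersection, even the minimum is unavailable. Take $A_1=I_2$, $A_2=A_3=0$, $B_1=\begin{bmatrix}1&0\end{bmatrix}$, $B_2=\begin{bmatrix}1&\epsilon\end{bmatrix}$. Then $\sigma_1^{\min}=1$ and $\sigma_2^{\min}=\sqrt{1+\epsilon^2}$, but $\sigma_B^{\min}\approx\epsilon/\sqrt2$. The negative eigenvalue $\frac12\bigl(1-\sqrt{1+4(\sigma_B^{\min})^2}\bigr)$ therefore tends to $0$ as $\epsilon\to0$, while every block quantity stays bounded away from degeneracy.

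So your fallback is the correct conclusion. This route proves the interior bound with $\sigma_B^{\min}$, the smallest singular value of $\begin{bmatrix}B_1\\B_2\end{bmatrix}$, which is positive by the trivial-intersection hypothesis. The statement should be corrected accordingly. Under orthogonal row spaces you may replace $(\sigma_B^{\min})^2$ by $\min\{(\sigma_1^{\min})^2,(\sigma_2^{\min})^2\}$, but never by the sum.
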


\begin{proof}
    The extremal eigenvalue bounds are derived from the R-matrix method: for all $v = [x_1^T, x_2^T, x_3^T]$ we have that
    \begin{align*}
    v^T \sA v &\le \begin{bmatrix} ||x_1|| & ||x_2|| & ||x_3|| \end{bmatrix} \cdot  R_U \cdot \begin{bmatrix} ||x_1|| \\ ||x_2|| \\ ||x_3|| \end{bmatrix} \quad \textrm{and}\\
    v^T \sA v &\ge \begin{bmatrix} ||x_1|| & ||x_2|| & ||x_3|| \end{bmatrix} \cdot R_L \cdot \begin{bmatrix} ||x_1|| \\ ||x_2|| \\ ||x_3|| \end{bmatrix},
    \end{align*}
    with $R_U$ and $R_L$ as defined as in \eqref{eq:U} and \eqref{eq:L}, respectively; the stated results follow.

    The interior eigenvalue bounds follow from applying Lemma \ref{lem:sp-eig-bounds} to the partitioned classical saddle-point system:
    \[
    \sA = \left[ \begin{array}{c | c c}
        A_1 & B_1^T & B_2^T \\
        \hline
        B_1 & -A_2 & 0 \\
        B_2 & 0 & -A_3
    \end{array} \right],
    \]
    and noting that, when $\range(B_1^T) \cap \range(B_2^T) = \{0\}$, all singular values of $\sigma_i$ of the block matrix $\begin{bmatrix}
        B_1 \\
        B_2
    \end{bmatrix}$ satisfy
    \[
    \sqrt{(\sigma_{1}^{\min})^2 + (\sigma_{2}^{\min})^2} \le \sigma_i \le \sqrt{(\sigma_{1}^{\max})^2 + (\sigma_{2}^{\max})^2}.
    \]
\end{proof}

\begin{remark}
    We note that Theorem \ref{thm:sa_eig} requires $\range(B_1^T) \cap \range(B_2^T) = \{ 0\}$. If this does not hold, the block matrix $\begin{bmatrix}
        B_1 \\
        B_2
    \end{bmatrix}$
    has one or more singular values equal to zero, in which case the upper bound on lower negative eigenvalues obtained using Lemma \ref{lem:sp-eig-bounds} will reduce to zero.
\end{remark}

\subsubsection{Block-tridiagonal eigenvalue bounds}

Eigenvalue bounds for $\sT$ are provided in \cite{bg22}; we re-state the derived results here. For a given $\sT$, we define the following cubic polynomials:

\begin{subequations}
	\begin{align}
		r(\lambda) =  \lambda^3 + (\mu_{2}^{\max} - \mu_{1}^{\min})\lambda^2 - \left(\mu_{1}^{\min} \mu_{2}^{\max} + (\sigma_{1}^{\max})^2 + (\sigma_{2}^{\min})^2 \right)\lambda + \mu_{1}^{\min}(\sigma_{2}^{\min})^2;
		\label{eq:r}
	\end{align}
	\begin{align}
		\begin{split}
			s(\lambda)  =  & \ \lambda^3 + (\mu^{\min}_{2} - \mu^{\max}_{1} - \mu^{\max}_{3})\lambda^2 \\ & +  \left( \mu^{\max}_{1}\mu^{\max}_{3} - \mu^{\max}_{2}\mu^{\min}_{2} - \mu^{\min}_{2}\mu^{\max}_{3} - (\sigma^{\max}_{1})^2 - (\sigma^{\max}_{2})^2   \right) \lambda  \\
			& + \left( \mu^{\max}_{1}\mu^{\min}_{2}\mu^{\max}_{2} + \mu_{1}^{\max}(\sigma^{\max}_{2})^2+ (\sigma^{\max}_{1})^2\mu^{\max}_{3} \right);
		\end{split}
		\label{eq:s}
	\end{align}
	
	\begin{align}
		\begin{split}
			t(\lambda) = & \ \lambda^3 + (\mu^{\max}_{2} -\mu^{\min}_{1} -  \mu^{\min}_{3})\lambda^2 \\ & + \left( \mu^{\min}_{1}\mu^{\min}_{3} - \mu^{\min}_{1}\mu^{\max}_{2} - \mu^{\max}_{2}\mu^{\min}_{3} - (\sigma^{\max}_{1})^2 - (\sigma^{\max}_{2})^2    \right) \lambda \\
			& + (\mu^{\min}_{1}\mu^{\max}_{2}\mu^{\min}_{3} + \mu_{1}^{\min}(\sigma^{\max}_{2})^2  + (\sigma^{\max}_{1})^2 \mu_{3}^{\min}).
		\end{split}
		\label{eq:t}
	\end{align}
	\label{eq:rst}
	\end{subequations}

It is shown in \cite[Corollary 2.1]{bg22} that all three of these polynomials have two positive real roots and one negative real root. We can then state the following result \cite[Theorem 2.2]{bg22}.

\begin{theorem}[Eigenvalue bounds for $\sT$]
    The eigenvalues of $\sT$ are bounded within the intervals
    \[
	\left[s^-, \frac{\mu^{\max}_{1} - \sqrt{(\mu^{\max}_{1})^2 + 4(\sigma^{\min}_{1})^2}}{2} \right] \ \bigcup \ \left[ r^+_{\min}, t^+_{\max} \right],
	\]
    where $s^-$ denotes the (single) negative root of $s(\lambda)$ \eqref{eq:s}, $r^+_{\min}$ the smaller positive root of $r(\lambda)$ \eqref{eq:r}, and $t^+_{\max}$ the larger positive root of $t(\lambda)$ \eqref{eq:t}.
\end{theorem}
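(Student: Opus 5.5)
The plan splits the four endpoints into two \emph{extremal} bounds, obtained with the R-matrix method, and two \emph{interior} bounds, obtained with energy estimates on the eigenvalue equations. Throughout, $v=[x^T,y^T,z^T]^T$ is an eigenvector of $\sT$ with eigenvalue $\lambda$, and the equations read
\[
A_1x+B_1^Ty=\lambda x,\qquad B_1x-A_2y+B_2^Tz=\lambda y,\qquad B_2y+A_3z=\lambda z .
\]
Lemma~\ref{lem:inertia} fixes the count of positive and negative eigenvalues, so it remains to bound each sign class separately.

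\textbf{Extremal bounds.} For $t^+_{\max}$, bound $v^T\sT v$ above by the quadratic form of the reduced matrix
\[
\begin{bmatrix}\mu_1^{\max}&\sigma_1^{\max}&0\\ \sigma_1^{\max}&-\mu_2^{\min}&\sigma_2^{\max}\\ 0&\sigma_2^{\max}&\mu_3^{\max}\end{bmatrix}
\]
applied to $(\|x\|,\|y\|,\|z\|)$. The largest eigenvalue of this matrix bounds $\lambda_{\max}(\sT)$. I would then check that its negated characteristic polynomial matches the relevant cubic, and likewise for the reduced lower matrix (with $\mu_1^{\min},-\mu_2^{\max},\mu_3^{\min}$ and negated off-diagonals), whose most negative eigenvalue bounds $\lambda_{\min}(\sT)$ from below. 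The indexing of the $\mu$'s in the printed $s$ and $t$ (e.g.\ $\mu_3^{\max}$ in $s$) suggests that the author's sharper route goes through energy estimates rather than the crude R-matrix. If the R-matrix characteristic polynomials do not match exactly, I would instead derive them as follows. For $\lambda<0$, solve the third equation for $z=(\lambda I-A_3)^{-1}B_2y$ and the first for $x=(\lambda I-A_1)^{-1}B_1^Ty$, both invertible since $\lambda<0$ and $A_1,A_3$ are semidefinite. Substituting into the second equation and taking the inner product with $y$ gives
\[
\lambda\|y\|^2 = -y^TA_2y + y^TB_1(\lambda-A_1)^{-1}B_1^Ty + y^TB_2^T(\lambda-A_3)^{-1}B_2y .
\]
I would bound each term by extreme eigen- and singular values and clear denominators to obtain a cubic inequality whose root is $s^-$; the positive maximal root $t^+_{\max}$ would be handled analogously.

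\textbf{Interior negative bound.} For $\lambda<0$, $\lambda I-A_1$ is negative definite. Taking the inner product of the first equation with $x$ gives $x^T(\lambda-A_1)x=-x^TB_1^Ty$, and the second and third equations give $\lambda\|y\|^2+\lambda\|z\|^2\ge x^TB_1^Ty$ plus nonpositive terms, after using $z^TA_3z\ge 0$ and $z^TB_2y$ from the third equation. This reduces to the classical saddle-point estimate $\lambda^2-\lambda\mu_1^{\max}-(\sigma_1^{\min})^2\ge 0$, as in Lemma~\ref{lem:sp-eig-bounds}, where $x\neq 0$ follows from the full rank of $B_1^T$. That yields the upper bound $\tfrac12\bigl(\mu_1^{\max}-\sqrt{(\mu_1^{\max})^2+4(\sigma_1^{\min})^2}\bigr)$.

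\textbf{Interior positive bound (main obstacle).} For $0<\lambda<r^+_{\min}$, one cannot divide by $\lambda I-A_1$ freely. The approach I would try first is to suppose $\lambda<\mu_1^{\min}$, which is harmless since otherwise the bound is immediate once $r^+_{\min}\le\mu_1^{\min}$ is checked. Then $x=(\lambda-A_1)^{-1}(-B_1^Ty)$ and, when $\lambda\ne\mu$ for eigenvalues $\mu$ of $A_3$, $z=(\lambda-A_3)^{-1}B_2y$. Eliminating gives a Schur-type equation in $y$. Bounding the $B_1$ term by $(\sigma_1^{\max})^2/(\mu_1^{\min}-\lambda)$ and using $\|B_2^T\cdot\|\ge\sigma_2^{\min}$ on the $z$-coupling should produce the cubic inequality $r(\lambda)\ge 0$, violated between $0$ and $r^+_{\min}$. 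The hard part is handling the semidefinite $A_3$ and $\lambda$ near eigenvalues of $A_3$. The explicit absence of $\mu_3$ in $r$ suggests bounding $z$ via $\|B_2^Tz\|\ge\sigma_2^{\min}\|z\|$ rather than inverting $\lambda-A_3$. Finally I would verify that $r$ has two positive roots so that $r^+_{\min}$ is well defined, citing \cite[Corollary 2.1]{bg22}.
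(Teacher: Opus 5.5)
Your architecture (R-matrix for the two outer endpoints, energy estimates for the two inner ones) is the route the paper attributes to \cite{bg22}, and both of your reduced matrices are correct. The problem is how you read the mismatch with the printed cubics. Expanding the $3\times 3$ determinants, the characteristic polynomial of your lower matrix (diagonal $\mu_1^{\min},-\mu_2^{\max},\mu_3^{\min}$) is exactly the printed $t(\lambda)$. That of your upper matrix (diagonal $\mu_1^{\max},-\mu_2^{\min},\mu_3^{\max}$) is the printed $s(\lambda)$ once two evident typos are fixed: $\mu_2^{\max}\mu_2^{\min}\to\mu_1^{\max}\mu_2^{\min}$ in the linear coefficient, and $\mu_1^{\max}\mu_2^{\min}\mu_2^{\max}\to\mu_1^{\max}\mu_2^{\min}\mu_3^{\max}$ in the constant term. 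So the roles of $s$ and $t$ in the displayed intervals are interchanged. The argument proves the lower bound given by the negative root of $t$ and the upper bound given by the largest root of $s$. The $\mu_3^{\max}$ in $s$ does not signal a sharper energy argument, and no argument can deliver the literal version. Take $n_1=n_2=2$, $n_3=1$, $A_1=I$, $A_2=\mathrm{diag}(0,100)$, $A_3=0$, $B_1=\epsilon I$, $B_2=[\,\epsilon\ \ 0\,]$. Then $\sT$ has an eigenvalue below $-100$, while $s(\lambda)=\lambda^3-\lambda^2-2\epsilon^2\lambda+\epsilon^2$ has its negative root in $(-1,0)$. Replacing $A_1$ by $\mathrm{diag}(1,100)$ likewise breaks the upper endpoint $t^+_{\max}$. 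Your energy fallback does not escape this. Bounding the three terms of your $y$-equation from below and multiplying by $(\mu_1^{\min}-\lambda)(\mu_3^{\min}-\lambda)>0$ gives precisely $t(\lambda)\ge 0$, the same cubic as the R-matrix.

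The genuine gap in the argument itself is the interior positive bound. Eliminating $z=(\lambda I-A_3)^{-1}B_2y$ and working with an equation in $y$ fails for more than the reason you give. For $\lambda$ inside the spectral range of $A_3$, the matrix $(\lambda I-A_3)^{-1}$ is indefinite. Even for $A_3=0$ the term is $\|B_2y\|^2/\lambda$, which has no lower bound in terms of $\|y\|$, because $B_2\in\mathbb{R}^{n_3\times n_2}$ has a kernel when $n_3<n_2$; $\sigma_2^{\min}$ controls only $\|B_2^Tz\|$. The missing idea is to eliminate in the opposite direction. For $0<\lambda<\mu_1^{\min}$, substitute $x=-(A_1-\lambda I)^{-1}B_1^Ty$ into the second equation. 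This gives $M(\lambda)y=B_2^Tz$ with $M(\lambda)=\lambda I+A_2+B_1(A_1-\lambda I)^{-1}B_1^T\succ 0$ and $M(\lambda)\preceq(\lambda+\mu_2^{\max}+(\sigma_1^{\max})^2/(\mu_1^{\min}-\lambda))I$. Testing the third equation with $z$ and dropping $z^TA_3z\ge 0$ yields
\[
\lambda\|z\|^2\;\ge\; z^TB_2M(\lambda)^{-1}B_2^Tz\;\ge\;\frac{(\sigma_2^{\min})^2\,\|z\|^2}{\lambda+\mu_2^{\max}+(\sigma_1^{\max})^2/(\mu_1^{\min}-\lambda)},
\]
with $z\neq 0$ (otherwise $y=0$ and then $x=0$). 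Clearing denominators gives $r(\lambda)\le 0$, not $r(\lambda)\ge 0$ as you wrote; since $r(0)=\mu_1^{\min}(\sigma_2^{\min})^2>0$, $r$ is positive on $[0,r^+_{\min})$. Also $r(\mu_1^{\min})=-\mu_1^{\min}(\sigma_1^{\max})^2\le 0$, so $r^+_{\min}\le\mu_1^{\min}$, which disposes of the case $\lambda\ge\mu_1^{\min}$. Your interior negative bound is right in substance, but the signs need fixing. The identity is $x^T(\lambda I-A_1)x=+x^TB_1^Ty$, and the chain is $\lambda\|y\|^2=x^TB_1^Ty-y^TA_2y+(\lambda\|z\|^2-z^TA_3z)\le y^TB_1(\lambda I-A_1)^{-1}B_1^Ty\le-(\sigma_1^{\min})^2\|y\|^2/(\mu_1^{\max}-\lambda)$.
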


We refer to \cite{bg22} for the proof of this theorem, which uses the R-matrix method to derive the two external bounds and energy estimates for the two interior bounds.


\section{Preconditioning}
\label{sec:iter-prec}
The block factorizations and spectral properties established in Section  \ref{sec:properties} provide a natural basis for the development of numerical solution methods. In this section we consider Schur complement preconditioning as an illustrative setting in which these structural features can be made explicit. Schur complements emerge directly from the block-LDL decompositions of $\st$ matrices, and their impact on spectral clustering can be understood using some of the eigenvalue bounds presented in the previous section. Our discussion therefore focuses on algebraic and spectral aspects of these preconditioners, without delving into implementation‑specific or application‑dependent details. We thus view  this section as one that provides some principles for developing block-preconditioning techniques, and not a comprehensive list of practical preconditioners, which would require 
diving deep into specific properties of the applications and the underlying discrete differential operators (when applicable). For completeness, we also include a brief overview of a few alternative preconditioning paradigms in Section \ref{sec:prec_other}.

Throughout this section, we will assume in addition to the size restrictions in Lemma \ref{lem:dimr} that the off-diagonal blocks $B_1$ and $B_2$ have full row rank. We use the notation $\mathcal{P}$ to denote a block preconditioner, with $D$ or $T$ as a subscript to denote that the preconditioner is block-diagonal or block-triangular, respectively, and $a$ or $t$ as a superscript to specify whether the preconditioner is for $\sA$ or $\sT$, respectively.

For performing eigenvalue analysis, we recall that if we use the same preconditioner, the spectra of left-preconditioned and right-preconditioned symmetric matrices are identical. In general, we will assume left preconditioning for block-diagonal preconditioners and right preconditioning for block-triangular.

\subsection{Preconditioners for $\sA$}
We begin by describing Schur complement-based preconditioners for the block-arrow matrix $\sA$. We will recall some results of Beik and Benzi \cite{bb18} and provide some new results based on the block-LDL decomposition of $\sA$ \eqref{eq:arrow_ldl}.

Because $\sA$ is, in effect, a classical saddle-point matrix, similar considerations for preconditioning hold as in the classical case -- specifically, if the matrix is unregularized (corresponding to $C=0$ in the classical saddle-point case described in Definition \ref{defn:sp}, and corresponding to $A_2 = A_3 = 0$ in the case of $\sA$), then block-diagonal Schur-complement-based preconditioners have some useful spectral properties; see \cite{mgw00} for discussion of the classical saddle-point case. However, with regularization ($C\ne0$ for the classical case, $A_2 \ne 0$ or $A_3 \ne 0$ in the double saddle-point case), it is more difficult to derive useful general-purpose spectral bounds with block-diagonal preconditioning, and we must, in general, consider block-triangular extensions to achieve spectral properties analogous to block-diagonal preconditioning in the unregularized case (see \cite{i01} for this discussion in the classical case).

\subsubsection{Block-diagonal preconditioners for block-arrow matrices}
Here we present some block-diagonal preconditioners for the unregularized block-arrow matrix
\begin{equation}
    \sA_0 = \begin{bmatrix}
        A_1 & B_1^T & B_2^T \\
        B_1 & 0 & 0 \\
        B_2 & 0 & 0
    \end{bmatrix}. 
\end{equation}

We consider the following preconditioner presented by Beik and Benzi \cite{bb18}:
\begin{equation}
    \mathcal{P}_{D,1}^a = \begin{bmatrix}
        A_1 & 0 & 0 \\
        0 & B_1 A_1^{-1}B_1^T & 0 \\
        0 & 0 & B_2 A_1^{-1}B_2^T
    \end{bmatrix}.
\end{equation}
We note that this preconditioner does not arise from the block-$3 \times 3$ LDL factorization of $\sA$ \eqref{eq:arrow_ldl}, but is rather obtained by considering the block-diagonal Schur complement preconditioner for classical saddle-point systems (see \cite{mgw00}) applied to $\sA_0$ with classical saddle-point partitioning, and then setting the (2,3)- and (3,2)-blocks of the preconditioner to 0. Beik and Benzi prove the following result \cite[Theorem 4.1]{bb18}.

\begin{theorem}[Eigenvalue bounds, $(\mathcal{P}_{D,1}^{a})^{-1} \sA_0$]
    Suppose $A_1 \succ 0$ and $B_1$ and $B_2$ have full row rank. Then the eigenvalues of $(\mathcal{P}_{D,1}^{a})^{-1} \sA_0$ lie in the union of intervals
    \[
    \left( -1, \frac{1-\sqrt{1+4\gamma^*}}{2} \right) \cup \{ 1 \} \cup \left( \frac{1+\sqrt{1+4\gamma^*}}{2}, 2\right),
    \]
    where
    \[
    \gamma^* = \min\frac{x^T \left( B_1^T (B_1A_1^{-1}B_1^T)^{-1} B_1 + B_2^T (B_2A_1^{-1}B_2^T)^{-1} B_2 \right)x}{x^TAx} > 0.
    \]
\end{theorem}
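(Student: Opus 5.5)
Write the eigenvalue problem $\sA_0 v = \lambda \mathcal{P}_{D,1}^a v$ with $v=[x;y;z]$, and abbreviate $S_1 = B_1A_1^{-1}B_1^T$, $S_2=B_2A_1^{-1}B_2^T$; both are SPD because $B_1,B_2$ have full row rank. The block equations are
\begin{align*}
A_1x + B_1^Ty + B_2^Tz &= \lambda A_1 x,\\
B_1 x &= \lambda S_1 y,\\
B_2 x &= \lambda S_2 z.
\end{align*}
\emph{Step 1: the eigenvalue $1$ and $x\neq0$.} If $\lambda=1$ there is nothing to prove. If $\lambda=0$, the first equation gives $x=-A_1^{-1}(B_1^Ty+B_2^Tz)$, and the other two force $B_1x=B_2x=0$. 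Then $x^TA_1x=-x^T(B_1^Ty+B_2^Tz)=0$, so $x=0$, hence $B_1^Ty+B_2^Tz=0$. Under the standing assumption that $[B_1;B_2]$ has full row rank this gives $y=z=0$, so $\lambda=0$ cannot occur. For $\lambda\neq 1$, $x=0$ would force $y=z=0$ by the same argument, so $x\neq0$.

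\emph{Step 2: reduce to a scalar quadratic.} For $\lambda\ne0,1$, eliminate $y=\lambda^{-1}S_1^{-1}B_1x$ and $z=\lambda^{-1}S_2^{-1}B_2x$. Substituting into the first equation gives
\[
\lambda(\lambda-1)A_1x = Gx,\qquad G := B_1^TS_1^{-1}B_1 + B_2^TS_2^{-1}B_2 .
\]
Take the inner product with $x$ and set $\gamma = x^TGx/x^TA_1x$. This yields $\lambda^2-\lambda-\gamma=0$, so
\[
\lambda = \frac{1\pm\sqrt{1+4\gamma}}{2}.
\]
These roots increase in magnitude with $\gamma$, so the bounds on $\lambda$ follow from bounds on $\gamma$ over the relevant $x$.

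\emph{Step 3: bound $\gamma$.} Write $u=A_1^{1/2}x$. Then $\gamma = u^T(P_1+P_2)u/u^Tu$, where $P_i = A_1^{-1/2}B_i^TS_i^{-1}B_iA_1^{-1/2}$ is the orthogonal projector onto $\range(A_1^{-1/2}B_i^T)$, exactly as $P$ in the proof of Proposition~\ref{prop:spd}.
\begin{itemize}
\item \emph{Upper bound.} Since $0\preceq P_i\preceq I$, we get $\gamma\le 2$. Equality would require $u$ to lie in both ranges, which is excluded when $\range(B_1^T)\cap\range(B_2^T)=\{0\}$. This gives $\gamma<2$, hence $\lambda<2$ on the positive side and $\lambda>(1-\sqrt9)/2=-1$ on the negative side.
\item \emph{Lower bound.} We have $\gamma\ge\gamma^*$ by definition. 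The quantity $\gamma^*>0$ when the minimum is taken over $x$ not annihilated by both $B_1$ and $B_2$. This is legitimate because, by Step 2, $Gx=\lambda(\lambda-1)A_1x\neq0$, so $x\notin\ker B_1\cap\ker B_2$. This yields the interior endpoints $(1\pm\sqrt{1+4\gamma^*})/2$.
\end{itemize}

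\textbf{Main obstacle.} The delicate part is strictness and the correct domain for $\gamma^*$. Strictness at $2$ and $-1$ needs the trivial-intersection condition on the ranges; without it these are only closed bounds. Strictness at the interior endpoints would require showing that $\gamma^*$ is not attained by an actual eigenvector. I expect to obtain closed intervals there, with strictness holding generically. I would check whether the cited result intends $\gamma^*$ as a minimum over $x\notin\ker B_1\cap\ker B_2$; if it does not, I would restate the interior endpoints as non-strict.
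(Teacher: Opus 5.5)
The paper gives no proof of this theorem; it quotes it from Beik and Benzi, so your argument has to be judged on its own. Steps 1--2 and the bound $\gamma<2$ via the projectors $P_1,P_2$ are correct and are the natural argument. You are also right that the strict endpoints $-1$ and $2$ need $\range(B_1^T)\cap\range(B_2^T)=\{0\}$. This is not among the stated hypotheses, but it follows from nonsingularity of $\sA_0$, which is part of the definition of $\st$. Without it, $0$, $-1$ and $2$ are all eigenvalues. Two things in Step 3 are wrong, however.

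\emph{The domain for $\gamma^*$.} Minimizing over $x\notin\ker B_1\cap\ker B_2$ does not give a positive minimum. Suppose $\ker B_1\cap\ker B_2\neq\{0\}$, which is automatic when $n_1>n_2+n_3$. Pick $x_0\neq0$ in this kernel and $x_1$ outside it. Then $x_\epsilon=x_0+\epsilon x_1$ is admissible, and $x_\epsilon^TGx_\epsilon/x_\epsilon^TA_1x_\epsilon=\epsilon^2x_1^TGx_1/x_\epsilon^TA_1x_\epsilon\to0$. So the infimum over your set is $0$ and is not attained. The correct set is the $A_1$-orthogonal complement $\range(A_1^{-1}[B_1^T\ B_2^T])$ of that kernel. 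Equivalently, $\gamma^*$ is the smallest \emph{nonzero} eigenvalue of the pencil $(G,A_1)$, i.e.\ of $P_1+P_2$. Your Step 2 already gives what you need. The relation $Gx=\lambda(\lambda-1)A_1x$ says $x$ is a generalized eigenvector, so $\gamma=\lambda(\lambda-1)$ is itself a nonzero eigenvalue of $P_1+P_2$. Hence $\gamma\ge\gamma^*$ with no Rayleigh-quotient domain argument at all.

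\emph{The interior endpoints.} Strictness there does not hold ``generically''; it never holds. Run Step 2 backwards. Let $Gx^*=\gamma^*A_1x^*$ with $x^*\neq0$, and let $\lambda$ be either root of $\lambda^2-\lambda-\gamma^*=0$; both roots are nonzero, since their product is $-\gamma^*$. Then $[x^*;\,\lambda^{-1}S_1^{-1}B_1x^*;\,\lambda^{-1}S_2^{-1}B_2x^*]$ satisfies all three block equations, because $A_1x^*+\lambda^{-1}Gx^*=(1+\gamma^*/\lambda)A_1x^*=\lambda A_1x^*$. So $\frac{1\pm\sqrt{1+4\gamma^*}}{2}$ are always eigenvalues of $(\mathcal{P}_{D,1}^a)^{-1}\sA_0$. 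For example, $A_1=I_2$, $B_1=[1\ 0]$, $B_2=[0\ 1]$ gives $\gamma^*=1$ and spectrum exactly $\{\frac{1\pm\sqrt5}{2}\}$, which lies entirely outside the printed union. The statement as printed, with open intervals at those endpoints, is therefore false. What your argument does prove, once $\gamma^*$ is defined correctly, is the inclusion in $\left(-1,\frac{1-\sqrt{1+4\gamma^*}}{2}\right]\cup\{1\}\cup\left[\frac{1+\sqrt{1+4\gamma^*}}{2},2\right)$. You should state that conclusion outright rather than leave it as an open question.
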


We now consider the analogous block-diagonal preconditioner we obtain from the block-$3 \times 3$ block-LDL decomposition \eqref{eq:arrow_ldl}. We begin with the positive definite block-diagonal preconditioner obtained by taking the block diagonal  term of the block-LDL decomposition, with all negative semidefinite blocks negated to create a positive definite preconditioner
$$
\mathcal{P}_{D,2}^{a} = \begin{bmatrix}
    A_1 & 0 & 0 \\
    0 & S_{a,1} & 0 \\
    0 & 0 & S_{a,2}
\end{bmatrix},
$$
with $S_{a,1} = B_1A_1^{-1}B_1^T$ and $S_{a,2} = B_2A_1^{-1}B_2^T - B_2A_1^{-1}B_1^T S_1^{-1}B_1A_1^{-1}B_2^T$ (we note that $S_{a,1}$ and $S_{a,2}$ here do not have an additive $A_2$ and $A_3$ term, respectively, because we are considering preconditioning for the system $\sA_0$). We recall that $S_{a,2}$ is guaranteed to be positive semidefinite, and will be positive definite if $\sA_0$ is invertible (see Lemma \ref{prop:spd}); for purposes of this discussion, we are assuming that the block-arrow system is invertible and that all Schur complement terms are thus positive definite.
The following result then holds; note that because we are assuming the unregularized block-arrow system $\sA_0$ we can assume without loss of generality that $n_2 \ge n_3$. 
\begin{theorem}[Eigenvalues of $(\mathcal{P}_{D,2}^a)^{-1} \sA_0$]
Assume without loss of generality that $n_2 \ge n_3$. The preconditioned matrix $(\mathcal{P}_{D,2}^a)^{-1} \sA_0$ has eigenvalue $\lambda = 1$ with multiplicity $n_1-(n_2+n_3)$, and eigenvalues $\lambda = \frac{1 \pm \sqrt{5}}{2}$, each with multiplicity of at least $n_2-n_3$.
\end{theorem}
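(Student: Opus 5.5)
The plan is to work directly with the generalized eigenvalue problem $\sA_0 v = \lambda \mathcal{P}_{D,2}^a v$ for $v = [x^T, y^T, z^T]^T \neq 0$. Since $\mathcal{P}_{D,2}^a$ is symmetric positive definite (we assume $\sA_0$ is invertible, so $S_{a,2}\spd$ by Proposition \ref{prop:spd}), this pencil is symmetric-definite. Hence $(\mathcal{P}_{D,2}^a)^{-1}\sA_0$ is diagonalizable with real eigenvalues. It therefore suffices to exhibit enough linearly independent eigenvectors: geometric and algebraic multiplicities coincide. Block by block, the eigen-equations read
\begin{align*}
(1-\lambda)A_1 x + B_1^T y + B_2^T z &= 0,\\
B_1 x &= \lambda S_{a,1} y,\\
B_2 x &= \lambda S_{a,2} z .
\end{align*}

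For $\lambda = 1$, I take $y = z = 0$ and $x \in \ker(B_1)\cap\ker(B_2)$; all three equations then hold trivially. Invertibility of $\sA_0$ with $A_2=A_3=0$ forces the stacked matrix $\begin{bmatrix} B_1 \\ B_2\end{bmatrix}$ to have full row rank $n_2+n_3$. This is the classical saddle-point argument behind Lemma \ref{lem:dimr}, or equivalently Theorem \ref{thm:invertibility-suff-arrow}. Consequently this joint kernel has dimension exactly $n_1-(n_2+n_3)$, which gives the claimed multiplicity of $\lambda=1$.

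For the golden-ratio eigenvalues, I look for eigenvectors with $z=0$ and $\lambda\ne 1$. The first equation gives $x = \frac{1}{\lambda-1}A_1^{-1}B_1^T y$. The third equation with $z=0$ then demands $B_2A_1^{-1}B_1^T y = 0$, so I restrict $y$ to $\ker(B_2A_1^{-1}B_1^T)$. This is the kernel of an $n_3\times n_2$ matrix, so its dimension is at least $n_2-n_3$; this is where the assumption $n_2\ge n_3$ enters. Substituting into the second equation gives $\frac{1}{\lambda-1}S_{a,1}y = \lambda S_{a,1} y$, using $S_{a,1}=B_1A_1^{-1}B_1^T$. Since $S_{a,1}$ is SPD and $y\neq 0$, this reduces to $\lambda^2-\lambda-1=0$, i.e. $\lambda = \frac{1\pm\sqrt5}{2}$. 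Also $x\ne 0$, because $B_1^T$ has full column rank.

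Finally, for a basis $\{y_j\}$ of $\ker(B_2A_1^{-1}B_1^T)$, the vectors $[x_j^T, y_j^T, 0]^T$ are linearly independent, since their $y$-parts are. This holds separately for each of the two roots. So each of $\frac{1\pm\sqrt5}{2}$ has multiplicity at least $n_2-n_3$.

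The only delicate points are the multiplicity bookkeeping, which is handled by diagonalizability of the definite pencil, and the exactness of the count for $\lambda=1$, which relies on the full row rank of the stacked $B$ matrix. The rest is direct substitution.
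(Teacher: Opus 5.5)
Your proposal is correct and follows essentially the paper's argument. The paper parametrizes the golden-ratio eigenvectors by $x\in\range(A_1^{-1}B_1^T)\cap\ker(B_2)$, while you parametrize them by $y\in\ker(B_2A_1^{-1}B_1^T)$; these give the same count because $A_1^{-1}B_1^T$ has full column rank. Your symmetric-definite-pencil remark also supplies the algebraic-versus-geometric multiplicity justification that the paper leaves implicit.

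To make the $\lambda=1$ count genuinely exact rather than a lower bound, you need one more line. For $\lambda=1$ the first block equation becomes $B_1^Ty+B_2^Tz=0$, which forces $y=z=0$ because $[\,B_1^T\ B_2^T\,]$ has full column rank, so the $\lambda=1$ eigenspace is exactly $(\ker B_1\cap\ker B_2)\times\{0\}\times\{0\}$. That full rank follows directly from invertibility of $\sA_0$, since otherwise $[0;y;z]$ would be a null vector; cite that rather than Theorem~\ref{thm:invertibility-suff-arrow}, which only gives sufficiency.
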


\begin{proof}
    We begin by writing the eigenvalue equations:
    \begin{subequations}\label{eq:eig_blocktri_noreg_prec}
        \begin{align}
            A_1 x + B_1^T y + B_2^T z &= \lambda A_1 x; \label{eq:eig_a1x}\\ 
            B_1 x &= \lambda S_{a,1} y; \label{eq:eig_sa1y}\\ 
            B_2 x &= \lambda S_{a,2} z. \label{eq:eig_sa2z}
        \end{align}
    \end{subequations}
    The stated result for the eigenvalue $\lambda =1$ follows from setting $x \in \ker\left(\begin{bmatrix} B_1 \\ B_2 \end{bmatrix} \right)$, $y=0$, $z=0$.

    Next, consider $x \in \ker(B_2)$ but $x \notin \ker(B_1)$. Equation \eqref{eq:eig_sa2z} gives us $z=0$. Using \eqref{eq:eig_sa1y} to obtain $y = \frac{1}{\lambda}S_{a,1}^{-1}B_1x$ and substituting this into \eqref{eq:eig_a1x} gives us
    \[
    x + \frac{1}{\lambda} A_1^{-1}B_1^T S_{a,1}^{-1}B_1 x - \lambda x = 0.
    \]
    Observe that, when $S_{a,1} = B_1A_1^{-1}B_1^T$, the matrix $A_1^{-1}B_1^T S_{a,1}^{-1}B_1$ is a projector onto $\range(A_1^{-1}B_1^T)$.
    For $x$ in this range, the equation for $\lambda$ simplifies to
    \[
    \lambda - 1 - \frac{1}{\lambda} = 0,
    \]
    yielding the values $\lambda = \frac{1\pm \sqrt{5}}{2}$.

    The multiplicity of these eigenvalues is given by the dimension of the subspace $\range(A_1^{-1} B_1^T) \cap \ker(B_2)$ (because we required $x \in \ker(B_2)$ in this case). We observe that a vector $x$ in $\range(A_1^{-1} B_1^T)$ can be written as $x = A_1^{-1} B_1^Ty$, for some $y \in \mathbb{R}^{n_2}$. Such a vector lies in $\ker(B_2)$ if and only if
    \[
    B_2 A_1^{-1}B_1^T y = 0.
    \]
    Thus, the multiplicity of the eigenvalues is given by the nullity of the matrix \\ $B_2 A_1^{-1}B_1^T \in \mathbb{R}^{n_3 \times n_2}$, which is at least $n_2 - n_3$.
\end{proof}

\subsubsection{Block-triangular preconditioners for $\sA$}
We can  consider a block-triangular preconditioner by multiplying two factors of the block-LDL decomposition \eqref{eq:arrow_ldl} -- either $\mathcal{D}_a \mathcal{L}_a^T$ (for an upper triangular preconditioner) or $\mathcal{L}_a \mathcal{D}_a$ (for a lower-triangular preconditioner) -- to obtain a preconditioner analogous to the block-triangular Schur complement preconditioner for the classical saddle-point system  \cite{i01}. Without loss of generality, we consider the upper triangular preconditioner
\begin{equation}
    \mathcal{P}_{T,1}^a := \mathcal{D}_a\mathcal{L}_a^T = \begin{bmatrix}
        A_1 & B_1^T & B_2^T \\
        0 & -S_{a,1} & -B_1A_1^{-1}B_2^T \\
        0 & 0 & S_{a,2}
    \end{bmatrix}
\end{equation}
with right preconditioning. The following result holds.

\begin{proposition}[Eigenvalues of $ \sA (\mathcal{P}_{T,1}^a)^{-1}$]
    The preconditioned operator \\ $\sA (\mathcal{P}_{T,1}^a)^{-1}$ has minimal polynomial $(\lambda - 1)^3$.  
\end{proposition}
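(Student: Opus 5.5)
The plan is to use the block-LDL factorization \eqref{eq:arrow_ldl} directly. We have $\sA = \sL_a \sD_a \sL_a^T$ and, by definition, $\mathcal{P}_{T,1}^a = \sD_a \sL_a^T$. Since $\sA$ is assumed invertible, Proposition~\ref{prop:spd} gives $A_1$, $S_{a,1}$ and $S_{a,2}$ all nonsingular, so $\sD_a$ is invertible. Also $\sL_a^T$ is unit block upper triangular, hence invertible. Therefore
\[
\sA (\mathcal{P}_{T,1}^a)^{-1} = \sL_a \sD_a \sL_a^T \, \sL_a^{-T} \sD_a^{-1} = \sL_a .
\]
The whole question then reduces to the minimal polynomial of the unit block lower triangular matrix $\sL_a$.

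Next, write $\sL_a = I + N$, where
\[
N = \begin{bmatrix} 0 & 0 & 0 \\ B_1A_1^{-1} & 0 & 0 \\ B_2A_1^{-1} & B_2A_1^{-1}B_1^TS_{a,1}^{-1} & 0 \end{bmatrix}
\]
is strictly block lower triangular. A direct block multiplication gives that $N^2$ has a single possibly nonzero block, in position $(3,1)$:
\[
(N^2)_{31} = B_2A_1^{-1}B_1^T S_{a,1}^{-1} B_1 A_1^{-1}.
\]
Consequently $N^3 = 0$. This shows $(\sL_a - I)^3 = 0$, so the minimal polynomial divides $(\lambda-1)^3$. In particular, every eigenvalue of the preconditioned operator equals $1$.

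The remaining step, which I expect to be the main point needing care, is to show that the minimal polynomial has degree exactly $3$. This requires $N \neq 0$ and $N^2 \neq 0$.
\begin{itemize}
\item $N \neq 0$ holds because $B_1 A_1^{-1} \neq 0$, since $B_1$ has full row rank.
\item $N^2 \neq 0$ holds if and only if $B_2A_1^{-1}B_1^T S_{a,1}^{-1}B_1A_1^{-1} \neq 0$. Because $S_{a,1}^{-1}B_1A_1^{-1}$ has full row rank, this is equivalent to $B_2A_1^{-1}B_1^T \neq 0$.
\end{itemize}
So the statement holds exactly when the constraint blocks are not $A_1^{-1}$-orthogonal. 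In the degenerate case $B_2A_1^{-1}B_1^T = 0$, the minimal polynomial drops to $(\lambda-1)^2$. I would state the result under the generic assumption $B_2A_1^{-1}B_1^T \neq 0$, and otherwise phrase it as ``divides $(\lambda-1)^3$''.

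A second point to check is the sign convention in the $(3,3)$ block. The product $\sD_a\sL_a^T$ actually has $-S_{a,2}$ there, while the displayed matrix shows $+S_{a,2}$. I would follow the definition $\mathcal{P}_{T,1}^a := \sD_a\sL_a^T$, since that is what yields $\sL_a$. If instead the $(3,3)$ block is taken as $+S_{a,2}$, the same computation gives
\[
\sA (\mathcal{P}_{T,1}^a)^{-1} = \sL_a \,\mathrm{diag}(I,I,-I).
\]
That operator has eigenvalues $\pm1$ rather than a single eigenvalue $1$, so the stated minimal polynomial $(\lambda-1)^3$ would not hold in that case.

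Finally, the same argument applies to the lower-triangular choice $\sL_a\sD_a$ with left preconditioning, where the preconditioned operator is $\sL_a^T$. Its minimal polynomial is identical to that of $\sL_a$, since a matrix and its transpose share a minimal polynomial.
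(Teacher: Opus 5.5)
Your argument is the paper's: both use $\sA = \sL_a\sD_a\sL_a^T$ to get $\sA(\mathcal{P}_{T,1}^a)^{-1} = \sL_a$, then check that $(\sL_a - I)^3 = 0$. Your two refinements are also correct and go beyond what the paper proves: $\sD_a\sL_a^T$ has $-S_{a,2}$ (not the displayed $+S_{a,2}$) in its $(3,3)$ block, and since nilpotency only shows that the minimal polynomial divides $(\lambda-1)^3$, it equals $(\lambda-1)^3$ exactly when $B_2A_1^{-1}B_1^T \neq 0$ and is $(\lambda-1)^2$ otherwise.
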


\begin{proof}
The proof follows from the block-LDL factorization of $\sA$. Note that 
\[
\sA  (\mathcal{P}_{T,1}^a)^{-1} = \sA (\mathcal{D}_a\mathcal{L}_a^T)^{-1} = \mathcal{L}_a,
\]
where
\[
\mathcal{L}_a = \begin{bmatrix}
    I & 0 & 0 \\
    B_1A_1^{-1} & I & 0 \\
    B_2A_1^{-1} & B_2A_1^{-1}B_1^T S_{a,1}^{-1} & I
\end{bmatrix}.
\]
It is immediate to verify  $(\mathcal{L}_a - I)^3 = 0$.
\end{proof}

The preconditioner $\mathcal{P}_{T,1}^a$ is clearly impractical, even as a theoretical baseline: the fill-in term $-B_1A_1^{-1}B_2^T$ in the (2,3)-block in addition to the computationally expensive $S_{a,1}$ and $S_{a,2}$ terms makes this preconditioner very expensive. Rather than proposing $\mathcal{P}_{T,1}^a$ for algorithmic use, we include this result primarily as a point of reference: it characterizes the idealized block-triangular preconditioner associated with the exact block‑LDL factorization, and provides a baseline against which more practical (inexact or approximate) triangular preconditioners can be compared.

Beik and Benzi \cite{bb18} propose the following block-triangular preconditioner (among other preconditioners they derive) for the case in which $A_2 = 0$, $A_3 \succeq 0$:
\begin{align*}
    \mathcal{P}_{T,2}^a &= \begin{bmatrix}
        A_1 & B_1^T & B_2^T \\
        0 & -B_1A_1^{-1}B_1^T & -B_1A_1^{-1}B_2^T \\
        0 & 0 & -(A_3 + B_2A_1^{-1}B_2^T)
    \end{bmatrix}.
\end{align*}
They show that 
that the eigenvalues of 
the preconditioned matrix
are contained in the interval $(0,2)$. (They in fact show that the eigenvalues are bounded away from zero, but the bound depends on  properties of the blocks of $\sA$; we refer to \cite[Theorem 4.7]{bb18} for details.)

\subsection{Preconditioners for $\sT$}
We now present preconditioners for the block-tridiagonal matrix $\sT$. Unlike in the block-arrow case, the positive definite block-diagonal preconditioner obtained from the block-LDL decomposition yields a preconditioned operator with constant spectral bounds, even when $A_2$ and $A_3$ are not zero. Extensive analysis has been performed on preconditioners for $\sT$; we refer to, for example, \cite{bfm25,bmpp24,bg22,cjl21,cmx09,hm19,pp24,sz18}.

\subsubsection{Block-diagonal preconditioners for $\sT$}
The most common algebraic preconditioner used for the block-tridiagonal system $\sT$ is the block-diagonal Schur complement preconditioner
\begin{equation}
    \label{eq:pre_blockdi_kt}
    \mathcal{P}_D^t = \begin{bmatrix}
        A & 0 & 0 \\
        0 & S_{t,1} & 0 \\
        0 & 0 & S_{t,2}
    \end{bmatrix}
\end{equation}
where $S_{t,1} = A_2 + B_1A_1^{-1}B_1^T$ and $S_{t,2} = A_3 + B_2 S_1^{-1} B_2^T$. This preconditioner comes from the block-diagonal term of the block-LDL factorization of $\sT$ given in \eqref{eq:tri_ldl}, and can be viewed as a natural extension of the preconditioner of Murphy, Golub, and Wathen \cite{mgw00} for classical saddle-point systems. This preconditioner has been analyzed, for example, in \cite{cmx09,sz18,hm19,bg22,bmpp25}.

Several theoretical results have been established for the spectral properties of \((\mathcal{P}_D^t)^{-1}\sT\). The bounds vary depending on whether $A_2$ and/or $A_3$ are equal to zero.

We begin with the case in which both $A_2$ and $A_3$ are zero. Recall that in this case we require that $n_2 \ge n_3$ and for $B_2$ to have full row rank in order for $\sT$ to be nonsingular.

\begin{theorem}[Eigenvalues of $(\mathcal{P}_D^t)^{-1}\sT$ when $A_2 =0, A_3=0$]
    When $A_2=0$ and $A_3=0$, the preconditioned matrix $(\mathcal{P}_D^t)^{-1}\sT$ has six distinct eigenvalues:
    \begin{itemize}
        \item $\lambda = 1$ with multiplicity $n_1-n_2$;
        \item $\lambda = \frac{1 \pm \sqrt{5}}{2}$, each with multiplicity $n_2-n_3$;
        \item \( \lambda = 2\cos\!\big(\tfrac{\pi}{7}\big)\;(\approx 1.802),\;
2\cos\!\big(\tfrac{3\pi}{7}\big)\;(\approx 0.445),\;
2\cos\!\big(\tfrac{5\pi}{7}\big)\;(\approx -1.247) \), each with multiplicity $n_3$. (These values are roots of the cubic polynomial $\lambda^3 - \lambda^2 -2\lambda +1$.)
    \end{itemize}
\end{theorem}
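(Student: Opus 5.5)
We plan to write out the eigenvalue equations of $(\mathcal{P}_D^t)^{-1}\sT$ with $A_2=A_3=0$, so that $S_{t,1}=B_1A_1^{-1}B_1^T$ and $S_{t,2}=B_2S_{t,1}^{-1}B_2^T$:
\begin{align*}
A_1x + B_1^Ty &= \lambda A_1 x,\\
B_1x + B_2^Tz &= \lambda S_{t,1} y,\\
B_2 y &= \lambda S_{t,2} z.
\end{align*}
Since $\lambda=0$ is excluded (the preconditioned matrix is nonsingular), we can solve the first equation for $x=(\lambda-1)^{-1}A_1^{-1}B_1^Ty$ whenever $\lambda\ne1$, and the third for $z=\lambda^{-1}S_{t,2}^{-1}B_2y$. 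Substituting both into the second equation and multiplying by $\lambda(\lambda-1)$ gives
\[
\lambda S_{t,1}y + (\lambda-1)B_2^TS_{t,2}^{-1}B_2y = \lambda^2(\lambda-1)S_{t,1}y.
\]
We then make the symmetric change of variables $w=S_{t,1}^{1/2}y$ and define
\[
\Pi=S_{t,1}^{-1/2}B_2^T\bigl(B_2S_{t,1}^{-1}B_2^T\bigr)^{-1}B_2S_{t,1}^{-1/2}.
\]
This $\Pi$ is the orthogonal projector onto $\range(S_{t,1}^{-1/2}B_2^T)$, which has dimension $n_3$ because $B_2$ has full row rank. The reduced equation becomes $\bigl(\lambda-\lambda^2(\lambda-1)\bigr)w+(\lambda-1)\Pi w=0$.

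The key step is to decompose $w$ along the eigenspaces of $\Pi$.

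\begin{itemize}
\item \textbf{Case $\Pi w=0$.} This subspace has dimension $n_2-n_3$. The equation becomes $\lambda(1-\lambda(\lambda-1))=0$, that is, $\lambda^2-\lambda-1=0$, giving $\lambda=\frac{1\pm\sqrt5}{2}$.
\item \textbf{Case $\Pi w=w$.} This subspace has dimension $n_3$. The equation becomes $\lambda-\lambda^3+\lambda^2+\lambda-1=0$, that is, $\lambda^3-\lambda^2-2\lambda+1=0$.
\end{itemize}

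In each case we read off $y$, $x$ and $z$ from the formulas above, and each choice of $w$ yields a genuine eigenvector. Linear independence follows because the $y$-components lie in complementary subspaces, and distinct $\lambda$ give independent vectors. Separately, $\lambda=1$ is handled directly: the first equation forces $B_1^Ty=0$, hence $y=0$ by full row rank. The third equation then gives $S_{t,2}z=0$, so $z=0$. Finally $B_1x=0$, so $\ker(B_1)$ supplies multiplicity $n_1-n_2$.

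Counting gives $(n_1-n_2)+2(n_2-n_3)+3n_3=n_1+n_2+n_3$, so these eigenpairs exhaust the spectrum and there are no other eigenvalues.

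The remaining step, which is the only nonroutine one, is to identify the roots of $\lambda^3-\lambda^2-2\lambda+1$ as $2\cos(k\pi/7)$ for $k=1,3,5$. We plan to set $\lambda=\zeta+\zeta^{-1}$ with $\zeta=e^{i\theta}$. Multiplying the cubic by $\zeta^3$ turns it into $\zeta^6-\zeta^5+\zeta^4-\zeta^3+\zeta^2-\zeta+1=(\zeta^7+1)/(\zeta+1)$. Its roots are $\zeta=e^{ik\pi/7}$ with $k$ odd and $k\ne7$, which yields exactly the three stated values. These values are distinct from each other and from $1$ and $\frac{1\pm\sqrt5}{2}$, so there are six distinct eigenvalues in total (when every multiplicity is positive). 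We expect the main care to lie in verifying this trigonometric identity and in checking that no eigenvalue has been double counted.
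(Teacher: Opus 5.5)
Your proof is correct and complete. You eliminate $x$ and $z$ onto the middle block and normalize by $S_{t,1}^{1/2}$, which turns $B_2^T S_{t,2}^{-1} B_2$ into the rank-$n_3$ orthogonal projector $\Pi$. You then treat $\lambda=1$ separately via $\ker(B_1)$, and the count $(n_1-n_2)+2(n_2-n_3)+3n_3$ gives a full eigenbasis, which fixes all multiplicities. Your substitution $\lambda=\zeta+\zeta^{-1}$ correctly reduces the cubic to $(\zeta^7+1)/(\zeta+1)$. Your caveat that ``six distinct'' requires $n_1>n_2>n_3$ is a correct refinement of the statement as written.

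The paper gives no derivation of its own; it only cites \cite[Theorem 2.3]{sz18} and related works. So the comparison is with the route taken there. The Sogn--Zulehner analysis handles block-tridiagonal systems with an arbitrary number of blocks. There the relevant polynomials come from the recurrence $P_0=1$, $P_1=\lambda-1$, $P_{j+1}=\lambda P_j-P_{j-1}$, equivalently differences of Chebyshev polynomials of the second kind. This recurrence produces exactly your $\lambda-1$, $\lambda^2-\lambda-1$ and $\lambda^3-\lambda^2-2\lambda+1$, with roots $2\cos\bigl(\frac{(2i-1)\pi}{2j+1}\bigr)$ for $1\le i\le j$.

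Your direct elimination is tailored to three blocks, but it is self-contained and yields explicit eigenvectors; for instance, $z=0$ on the $\frac{1\pm\sqrt5}{2}$ eigenspaces. The recursive viewpoint explains why the $2\cos$ pattern appears and extends verbatim to multiple saddle-point systems. Your $\zeta+\zeta^{-1}$ substitution is just the $j=3$ case of setting $\lambda=2\cos\theta$ in that framework.
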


\begin{proof}
    See \cite[Theorem 2.3]{sz18}, \cite[Theorem 2.1]{hm19}, and \cite[Theorem 2.2]{cjl21}.
\end{proof}

When $A_2 = 0$ and $A_3 \succeq 0$, we require $n_2 \ge n_3$ and for $B_1$ to have full row rank in order for $S_{t,1}$ to be invertible. However, we may have a rank-deficient $B_2$ and still have nonsingular $\sT$ and $S_{t,2}$. Let $k$ denote the nullity of $B_2$. The following result is shown in \cite[Theorem 3.2]{bg22}.

\begin{theorem}[Eigenvalues of $(\mathcal{P}_D^t)^{-1}\sT$ when $A_2 = 0$, $A_3 \succeq 0$]
	\label{thm:bounds_prec_c10}
	When $A_2 = 0$ and $A_3 \succeq 0$, the eigenvalues of $(\mathcal{P}_D^t)^{-1}\sT$ are given by: $\lambda = 1$ with multiplicity $n_1-n_2+k$; $\lambda = \frac{1 \pm \sqrt{5}}{2}$, each with multiplicity $n_2-n_3+k$; and $n_3-k$ eigenvalues located in each of the three intervals:
	\begin{itemize}
	\item $I_1 = \left[ 2\cos\left( \frac{5\pi}{7} \right), \frac{1 - \sqrt{5}}{2} \right) \approx [-1.247,-0.618)$
	\item $I_2 = \left[ 2\cos\left( \frac{3\pi}{7} \right), 1 \right) \approx [0.445, 1) $
	\item $I_3 = \left( \frac{1 + \sqrt{5}}{2}, 2\cos\left( \frac{\pi}{7} \right) \right] \approx (1.618,1.802].$
	\end{itemize}
\end{theorem}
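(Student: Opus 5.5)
The plan is to symmetrize the preconditioned matrix, split the space into invariant pieces, and reduce everything to a one-parameter family of $3\times 3$ problems.

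\textbf{Symmetrization.} Since $(\mathcal{P}_D^t)^{-1}\sT$ is similar to $\widehat{\sT} := (\mathcal{P}_D^t)^{-1/2}\sT(\mathcal{P}_D^t)^{-1/2}$, I will work with the latter. With $A_2=0$ we have $S_{t,1}=B_1A_1^{-1}B_1^T$. Define
\[
R_1 := S_{t,1}^{-1/2}B_1A_1^{-1/2},\qquad R_2 := S_{t,2}^{-1/2}B_2S_{t,1}^{-1/2},\qquad D := S_{t,2}^{-1/2}A_3S_{t,2}^{-1/2}.
\]
Then
\[
\widehat{\sT}=\begin{bmatrix} I & R_1^T & 0\\ R_1 & 0 & R_2^T \\ 0 & R_2 & D\end{bmatrix},
\]
and the definitions of the Schur complements give two identities:
\[
R_1R_1^T=I_{n_2},\qquad R_2R_2^T + D = I_{n_3}.
\]
The first says $R_1$ has orthonormal rows. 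The second shows $D=I-R_2R_2^T$ is diagonalized by the left singular vectors of $R_2$.

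\textbf{Decomposition into invariant subspaces.} Let $k$ be the rank deficiency of $B_2$. Then $R_2$ has rank $n_3-k$, $\ker R_2\subset\mathbb{R}^{n_2}$ has dimension $n_2-n_3+k$, and $\ker R_2^T$ has dimension $k$. Write the SVD pairs as $R_2u_i=\sigma_i v_i$ with $\sigma_i\in(0,1]$ for $i=1,\dots,n_3-k$; the bound $\sigma_i\le 1$ follows from $R_2R_2^T\preceq I$. I will exhibit explicit eigenvectors in four families.
\begin{enumerate}[label=(\alph*)]
\item For $x\perp\range(R_1^T)$ and $y=z=0$, we get $\lambda=1$. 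This gives $n_1-n_2$ eigenvalues.
\item For $z\in\ker R_2^T$ and $x=y=0$, we have $Dz=z$, so $\lambda=1$. This gives $k$ more eigenvalues.
\item For $y\in\ker R_2$, $z=0$ and $x=R_1^Ty/(\lambda-1)$, the second block row reduces to $\lambda^2-\lambda-1=0$, so $\lambda=\frac{1\pm\sqrt5}{2}$. This gives $n_2-n_3+k$ eigenvalues for each sign.
\item For each $i$, the span of $(R_1^Tu_i,0,0)$, $(0,u_i,0)$, $(0,0,v_i)$ is invariant. There $\widehat{\sT}$ acts as
\[
M(\sigma)=\begin{bmatrix}1&1&0\\1&0&\sigma\\0&\sigma&1-\sigma^2\end{bmatrix},\qquad \sigma=\sigma_i .
\]
\end{enumerate}
These subspaces are mutually orthogonal, because $R_1$ is an isometry on $\mathbb{R}^{n_2}$ and the $u_i$, $v_i$ are orthonormal. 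Their dimensions sum to $(n_1-n_2+k)+2(n_2-n_3+k)+3(n_3-k)=n_1+n_2+n_3$, so the spectrum is fully accounted for.

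\textbf{The $3\times 3$ blocks.} It remains to locate the three eigenvalues of $M(\sigma)$ for $\sigma\in(0,1]$. The characteristic polynomial is affine in $s=\sigma^2$:
\[
\chi_s(\lambda)=\det(M(\sigma)-\lambda I)=(1-\lambda)(\lambda^2-\lambda-1)+s\,g(\lambda)
\]
for an explicit polynomial $g$ that I will compute. At $s=0$ the roots are $1$ and $\frac{1\pm\sqrt5}{2}$. At $s=1$ we have $\chi_1(\lambda)=-(\lambda^3-\lambda^2-2\lambda+1)$, whose roots are $2\cos(\pi/7)$, $2\cos(3\pi/7)$, $2\cos(5\pi/7)$.

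\textbf{Main obstacle: placing the roots in $I_1,I_2,I_3$ for all $s\in(0,1]$.} I will do this by sign checks and the intermediate value theorem, using the affine dependence on $s$. At the $s=0$ roots, $\chi_s$ equals $s\,g$ evaluated there; computing these values shows $\chi_s$ has a fixed nonzero sign for all $s>0$. For example, $\chi_s(1)=s$. At the $s=1$ roots, $\chi_s=(1-s)\chi_0$ evaluated there, which has a fixed sign for $s<1$ and vanishes at $s=1$. Since $s\mapsto\chi_s(\lambda)$ is affine, each endpoint sign interpolates between $s=0$ and $s=1$. Confirming that the signs alternate correctly across the three intervals gives exactly one root in each. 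The endpoints $\frac{1\pm\sqrt5}{2}$ and $1$ are excluded because $\chi_s\neq0$ there when $s>0$. The $\cos$ endpoints are included because they are attained at $\sigma=1$. I expect the real work to be checking the signs of $g$ at $\frac{1\pm\sqrt5}{2}$ and the sign of $\chi_0$ at the three cosine values; if a sign is awkward, I will instead use continuity of the roots in $s$ and show that no root can cross an endpoint.
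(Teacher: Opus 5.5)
Your proof is correct. The step you deferred also goes through. One has $g(\lambda)=\lambda(2-\lambda)$, so $\chi_s(1)=s$, $\chi_s\bigl(\tfrac{1+\sqrt5}{2}\bigr)=s\,\tfrac{\sqrt5-1}{2}>0$ and $\chi_s\bigl(\tfrac{1-\sqrt5}{2}\bigr)=-s\,\tfrac{1+\sqrt5}{2}<0$. Meanwhile $\chi_0=(1-\lambda)(\lambda^2-\lambda-1)$ is positive at $2\cos(5\pi/7)$ and negative at $2\cos(3\pi/7)$ and $2\cos(\pi/7)$. Hence for $0<s<1$ the endpoint values have opposite signs on each of $I_1,I_2,I_3$, and at $s=1$ the roots are the cosine endpoints themselves. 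Your reading of $k$ as $\dim\ker B_2^T$, rather than the literal ``nullity of $B_2$'', is the one under which the counts add up.

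The paper gives no proof of its own and defers to \cite[Theorem 3.2]{bg22}. As far as I recall, the argument there works from the block eigenvalue equations rather than from an invariant-subspace decomposition. One eliminates $x$ (for $\lambda\neq1$) and then $y$ (for $\lambda^2-\lambda-1\neq0$), and a Rayleigh quotient in $z$ gives the same cubic, with a parameter in $(0,1]$ playing the role of your $s$.

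You instead symmetrize and split the space orthogonally into invariant subspaces of dimensions $1$, $2$ and $3$. This is possible precisely because $A_2=0$ gives $R_1R_1^T=I$, and because $D=I-R_2R_2^T$ is diagonalized by the singular vectors of $R_2$. What this buys you is the counting. Since the subspaces exhaust $\mathbb{R}^{n_1+n_2+n_3}$, the multiplicities $n_1-n_2+k$, $n_2-n_3+k$, and exactly $n_3-k$ per interval are pure dimension bookkeeping. You also identify the interval eigenvalues exactly as the roots of $\chi_{\sigma_i^2}$, where $\sigma_i^2$ runs over the nonzero eigenvalues of $S_{t,2}^{-1}B_2S_{t,1}^{-1}B_2^T$.

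The eigenvalue-equation route is the more robust one. It still yields inclusion intervals when $A_2\neq0$ or when the Schur complements are approximated. In those settings $R_1R_1^T\neq I$, and your three-dimensional subspaces are in general no longer invariant. This is consistent with only interval bounds being stated in Theorem~\ref{thm:bounds_triSC_prec}.
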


Bounds for the case in which $A_2 \succeq 0$, $A_3 = 0$ are the same as when $A_2, A_3 \succeq 0$. Bounds for this case are given in the following result.

\begin{theorem} [Eigenvalue bounds,  matrix $(\mathcal{P}_D^t)^{-1}\sT$, $A_2, A_3 \succeq 0$]
\label{thm:bounds_triSC_prec}
The eigenvalues of $(\mathcal{P}_D^t)^{-1}\sT$ are bounded within the intervals 
$$
\left[ -\frac{1+\sqrt5}{2}, \frac{1-\sqrt{5}}{2}\right] \cup \left[ 2\cos\left( \frac{3\pi}{7} \right), 2\cos\left( \frac{\pi}{7} \right)\right],
$$
which are approximately
$[ -1.618, -0.618 ] \cup [ 0.445, 1.802].$
\end{theorem}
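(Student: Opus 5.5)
The plan is to symmetrize the preconditioned operator and reduce the eigenvalue problem to a scalar equation in a few bounded parameters. The extremal values of that equation should be the roots of $\lambda^3-\lambda^2-2\lambda+1$ and of $\lambda^2-\lambda-1$, together with the value $-\frac{1+\sqrt5}{2}$.

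\textbf{Step 1 (normalization).} Since $\mathcal{P}_D^t\succ 0$, the spectrum of $(\mathcal{P}_D^t)^{-1}\sT$ equals that of $(\mathcal{P}_D^t)^{-1/2}\sT(\mathcal{P}_D^t)^{-1/2}$. This matrix is
\[
\begin{bmatrix} I & R_1^T & 0\\ R_1 & -X & R_2^T\\ 0 & R_2 & Y\end{bmatrix},
\]
where $R_1=S_{t,1}^{-1/2}B_1A_1^{-1/2}$, $X=S_{t,1}^{-1/2}A_2S_{t,1}^{-1/2}$, $R_2=S_{t,2}^{-1/2}B_2S_{t,1}^{-1/2}$ and $Y=S_{t,2}^{-1/2}A_3S_{t,2}^{-1/2}$. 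The definitions of the Schur complements give the two identities
\[
R_1R_1^T=I-X,\qquad R_2R_2^T=I-Y .
\]
Hence $0\preceq X,Y\preceq I$ and $\|R_1\|,\|R_2\|\le1$.

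\textbf{Step 2 (elimination).} Write the eigenvalue equations as
\[
x+R_1^Ty=\lambda x,\qquad R_1x-Xy+R_2^Tz=\lambda y,\qquad R_2y+Yz=\lambda z .
\]
The value $\lambda=1$ lies in the second interval and needs no further work. For $\lambda\neq1$, eliminate $x=R_1^Ty/(\lambda-1)$. If in addition $\lambda$ is not an eigenvalue of $Y$, eliminate $z=(\lambda I-Y)^{-1}R_2y$. Substitute $R_1R_1^T=I-X$, take the inner product with a unit vector $y$, and set $\xi=y^TXy\in[0,1]$. This gives
\[
\lambda=\frac{1-\xi}{\lambda-1}-\xi+w^T(\lambda I-Y)^{-1}w,\qquad w=R_2y .
\]
To control the last term, note that $R_2^T(R_2R_2^T)^{-1}R_2$ is an orthogonal projector. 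Together with $R_2R_2^T=I-Y$ this yields the constraint $w^T(I-Y)^{-1}w\le1$. Expand in an eigenbasis of $Y$ with eigenvalues $\upsilon_i\in[0,1]$. Each coefficient ratio $(1-\upsilon)/(\lambda-\upsilon)$ is monotone in $\upsilon$, with the direction of monotonicity set by the sign of $1-\lambda$. So the last term equals $\theta/\lambda$ for some $\theta\in[0,1]$ at the relevant extreme, or at least lies between $0$ and $1/\lambda$ (resp.\ between $0$ and $\frac{1}{\lambda-1}$-type expressions, depending on the case). The eigenvalue must therefore satisfy a scalar relation $F(\lambda;\xi,\theta)=0$ with $\xi,\theta\in[0,1]$.

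\textbf{Step 3 (extremizing).} Treat each sign region of $\lambda$ separately: $\lambda>1$, $0<\lambda<1$, and $\lambda<0$. In each region, check the monotonicity of $F$ in $\xi$ and $\theta$ and evaluate at the corners.
\begin{itemize}
\item At $\xi=0,\theta=1$ the relation is $\lambda=\frac{1}{\lambda-1}+\frac{1}{\lambda}$, i.e.\ $\lambda^3-\lambda^2-2\lambda+1=0$. Its roots $2\cos(\pi/7)$ and $2\cos(3\pi/7)$ should give the positive endpoints.
\item At $\theta=0$ the relation reduces to $\lambda^2-\lambda-1=0$, giving $\frac{1-\sqrt5}{2}$ as the upper negative endpoint.
\item At $\xi=1$ it gives $\lambda=-1$. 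The lower endpoint $-\frac{1+\sqrt5}{2}$ should come from the opposite corner ($\xi$ large together with $\theta$ maximal in the negative case). Alternatively, one can use the R-matrix bound on the negative side, where $-X$ helps.
\end{itemize}
Finally, the case where $\lambda$ is an eigenvalue of $Y$ (so $\lambda\in[0,1]$) is handled separately, using the third equation and the identity $R_2R_2^T=I-Y$.

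\textbf{Main obstacle.} I expect the hardest part to be the careful sign and monotonicity bookkeeping in Step 3, especially for $0<\lambda<1$ and $\lambda<0$. There the direction of the inequality on $w^T(\lambda I-Y)^{-1}w$ flips, and one must verify that the true extremes occur at the corners $\xi,\theta\in\{0,1\}$. My first attempt would be to clear denominators by multiplying through by $\lambda(\lambda-1)$, and then argue that the resulting cubic in $\lambda$ is affine in $(\xi,\theta)$, so its root locations move monotonically between the corner cubics.
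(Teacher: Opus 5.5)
Your normalization ($R_1R_1^T=I-X$, $R_2R_2^T=I-Y$) and the elimination onto $y$ are correct, and they give three of the four endpoints. For $\lambda>1$ and for $\lambda<0$, every weight $(1-\upsilon)/(\lambda-\upsilon)$ with $\upsilon\in[0,1)$ lies between $0$ and $1/\lambda$. So $w^T(\lambda I-Y)^{-1}w=\theta/\lambda$ with $\theta\in[0,1]$. Clearing denominators gives $F(\lambda;\xi,\theta)=\lambda^3+(\xi-1)\lambda^2-(1+\theta)\lambda+\theta=0$, with $\partial_\xi F=\lambda^2$ and $\partial_\theta F=1-\lambda$. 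The corner $(\xi,\theta)=(0,1)$ gives $2\cos(\pi/7)$, and the corners $(0,0)$ and $(1,1)$ give $\frac{1-\sqrt5}{2}$ and $-\frac{1+\sqrt5}{2}$.

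The gap is the lower positive endpoint $2\cos(3\pi/7)$. For $0<\lambda<1$, the matrix $\lambda I-Y$ is in general indefinite. The weights are at least $1/\lambda$ and blow up as $\upsilon\uparrow\lambda$, and they are negative and unbounded as $\upsilon\downarrow\lambda$. Hence $w^T(\lambda I-Y)^{-1}w$ can take any real value, and your relation rules out nothing; the Step 2 claim that the term lies between $0$ and $1/\lambda$ is false there. Even when $Y=0$, where the term really is $\theta/\lambda$, the corner argument fails. For $\lambda\in(0,2\cos(3\pi/7))$ one has $F(\lambda;0,0)=\lambda(\lambda^2-\lambda-1)<0<\lambda^3-\lambda^2-2\lambda+1=F(\lambda;0,1)$. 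So admissible parameters with $F=0$ exist, e.g. $\lambda=0.3$, $\xi=0$, $\theta=\lambda(1+\lambda-\lambda^2)/(1-\lambda)\approx 0.52$. The root cause is that the Schur complement onto the middle block is in general indefinite for these $\lambda$, so no Rayleigh quotient in $y$ can certify that it is nonsingular. Your deferred case, $\lambda$ an eigenvalue of $Y$, lies in the same range and stays open for the same reason.

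The missing idea is to eliminate toward the \emph{last} block and test with $z$. For $0<\lambda<1$, the second equation reads $G(\lambda)y=R_2^Tz$ with $G(\lambda)=(\lambda+\frac{1}{1-\lambda})I-\frac{\lambda}{1-\lambda}X$, whose eigenvalues lie in $[1+\lambda,\frac{1+\lambda-\lambda^2}{1-\lambda}]$. Hence $y=G(\lambda)^{-1}R_2^Tz$ and $z\neq0$. For unit $z$ with $\eta=z^TYz\in[0,1]$, the third equation gives
\[
\lambda=z^TR_2G(\lambda)^{-1}R_2^Tz+z^TYz\;\ge\;\frac{1-\lambda}{1+\lambda-\lambda^2}(1-\eta)+\eta\;\ge\;\frac{1-\lambda}{1+\lambda-\lambda^2},
\]
that is, $\lambda^3-\lambda^2-2\lambda+1\le 0$, so $\lambda\ge 2\cos(3\pi/7)$. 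This never inverts $\lambda I-Y$, so the special case disappears.

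More systematically, consider the block $LDL^T$ pivots of the shifted scaled matrix: $S_1=(1-\lambda)I$, $S_2=-G(\lambda)$, $S_3=Y-\lambda I-R_2S_2^{-1}R_2^T$. Each is definite for every $\lambda$ outside the two intervals, using only $0\preceq X,Y\preceq I$ and your two identities. This proves all four endpoints uniformly and removes the sign bookkeeping you flagged as the main obstacle.
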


\begin{proof}
    See the proofs of \cite[Theorem 3.3]{bg22} or \cite[Theorem 5.3]{pp21}.
\end{proof}

\subsubsection{Block-triangular preconditioners for $\sT$}

Similar to the block-arrow case, we consider the upper triangular preconditioner obtained by taking $\mathcal{D}_t\mathcal{L}_t^T$ obtained from the block-LDL decomposition of $\sT$ \eqref{eq:tri_ldl}, which in this case gives
\begin{equation}
    \mathcal{P}_T^t := \mathcal{D}_t\mathcal{L}_t^T = \begin{bmatrix}
        A_1 & B_1^T & 0 \\
        0 & -S_{t,1} & B_2^T \\
        0 & 0 & S_{t,2}
    \end{bmatrix}.
\end{equation}

We can make the following observation.

\begin{proposition}[Eigenvalues of $ \sT (\mathcal{P}_T^t)^{-1}$]
    The right-preconditioned operator  $ \sT (\mathcal{P}_T^t)^{-1}$  has minimal polynomial $(\lambda - 1)^3$.  
\end{proposition}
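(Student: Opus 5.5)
The argument mirrors the one used for $\mathcal{P}_{T,1}^a$. By the block-LDL factorization \eqref{eq:tri_ldl} we have $\sT = \sL_t \sD_t \sL_t^T$. Since $\mathcal{P}_T^t = \sD_t \sL_t^T$ is a product of invertible factors, it is invertible. Indeed, $\sL_t^T$ is unit upper block-triangular, and $\sD_t$ is invertible because $A_1$, $S_{t,1}$ and $S_{t,2}$ are positive definite under our standing assumptions: $B_1$ and $B_2$ have full row rank, and the dimension restrictions of Lemma \ref{lem:dimr} hold. It follows that
\[
\sT (\mathcal{P}_T^t)^{-1} = \sL_t \sD_t \sL_t^T (\sD_t \sL_t^T)^{-1} = \sL_t = \begin{bmatrix} I & 0 & 0 \\ B_1 A_1^{-1} & I & 0 \\ 0 & -B_2 S_{t,1}^{-1} & I \end{bmatrix}.
\]

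Next set $N := \sL_t - I$. This matrix is strictly block lower triangular, with nonzero blocks only in positions $(2,1)$ and $(3,2)$. A direct multiplication shows that $N^2$ has a single possibly nonzero block, namely $-B_2 S_{t,1}^{-1} B_1 A_1^{-1}$ in position $(3,1)$, and that $N^3 = 0$. Hence $(\lambda-1)^3$ annihilates $\sT(\mathcal{P}_T^t)^{-1}$, and the minimal polynomial divides $(\lambda-1)^3$.

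It remains to show that the degree is exactly three, i.e.\ that $N^2 \neq 0$. This is the only step requiring any care. It reduces to showing $B_2 S_{t,1}^{-1} B_1 A_1^{-1} \neq 0$, and since $A_1^{-1}$ and $S_{t,1}^{-1}$ are invertible, this is equivalent to $B_2 S_{t,1}^{-1} B_1 \neq 0$. Because $B_1$ has full row rank, it is surjective onto $\mathbb{R}^{n_2}$. So $S_{t,1}^{-1}B_1$ is also surjective, and $B_2 S_{t,1}^{-1} B_1 = 0$ would force $B_2 = 0$. That is excluded: $B_2$ has full row rank with $n_3 \ge 1$.

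Consequently $N^2 \ne 0$, and in particular $N \ne 0$. The minimal polynomial is therefore exactly $(\lambda-1)^3$, and in particular all eigenvalues equal $1$. For Krylov methods such as GMRES, this means convergence in at most three iterations in exact arithmetic.
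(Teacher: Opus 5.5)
Your proof is correct and follows the paper's route: you use the block-LDL factorization to identify $\sT(\mathcal{P}_T^t)^{-1} = \sL_t$, then check that $\sL_t - I$ is nilpotent of index three. Your extra verification that $(\sL_t - I)^2 \neq 0$ goes beyond the paper, which only checks $(\sL_t - I)^3 = 0$; you derive it from $B_2 S_{t,1}^{-1} B_1 \neq 0$ via the full row rank of $B_1$, and this is what makes the minimal polynomial exactly $(\lambda-1)^3$ rather than merely a divisor of it.
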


\begin{proof}
The proof follows from the block-LDL factorization of $\sT$: we observe that 
\[
\sT  (\mathcal{P}_T^t)^{-1} = \sT (\mathcal{D}_t\mathcal{L}_t^T)^{-1} = \mathcal{L}_t,
\]
where
\[
\mathcal{L}_t = \begin{bmatrix}
    I & 0 & 0 \\
    B_1A_1^{-1} & I & 0 \\
    0 & -B_2 S_{t,1}^{-1} & I
\end{bmatrix}.
\]
We can verify  $(\mathcal{L}_t - I)^3 = 0$.
\end{proof}

Additional variations of block-triangular Schur complement-based preconditioners for $\sT$ are presented and analyzed in \cite{cjl21}, including for the case of nonsymmetric matrices. 

We also note that several works have analyzed Schur complement preconditioning for $\sT$ in settings where the Schur complements are applied inexactly. The first such analysis for double saddle‑point systems was carried out by Bradley \& Greif \cite{bg22}, who combined eigenvalue bounds for the unpreconditioned operator with spectral equivalence assumptions on approximate Schur complements to derive inclusion intervals for the preconditioned spectrum. More recently, Bergamaschi et. al. \cite{bmpp25} used perturbation analysis to improve on the interior bounds derived in \cite{bg22}. Similar analyses for block-triangular preconditioners was also completed by Bergamaschi et. al. \cite{bfm25} with a focus on linear systems arising from poromechanics, and for multiple saddle-point systems by Pilotto et. al. \cite{pbm26}

\subsection{Other preconditioning approaches}
\label{sec:prec_other}

While our discussion in this section has focused on Schur complement–based preconditioning strategies, we will note that other preconditioning approaches have also been developed. We restrict attention here to a brief overview intended to indicate the range of existing approaches, rather than to provide a detailed survey or analysis. Some of the techniques here are straightforward generalizations from  classical saddle-point systems and, while they do not necessarily exploit features unique to the double saddle-point structure, they nonetheless give rise to interesting avenues of research. We mention here constraint preconditioning or monolithic multigrid as prominent examples; applications to double saddle-point systems can be found, for example, in \cite{abcfmt21,ch24,rg13}.

{\em Operator preconditioning} is a powerful paradigm, closely related to Schur complement preconditioning \cite{MardalWinther2011}. It is done at the continuous operator level, on chosen function spaces. The preconditioners are defined via Riesz maps induced by an appropriately chosen norm. The goal is to obtain a preconditioned operator that is uniformly well-conditioned independently of mesh size and physical parameters. This is accomplished by making a judicious choice of spaces and using inf-sup stability on the chosen spaces.

Operator preconditioning is connected to Schur complement preconditioning in the sense that the eventual block preconditioner is often an effective approximation of an ideal Schur complement-based block preconditioner. Because it relies on the same underlying mechanisms, operator preconditioning can be applied to double saddle-point systems in a manner analogous to its use for classical saddle-point problems. It has proven to be extremely robust in some instances, e.g., for Stokes-Darcy \cite{hkm21}.

{\em Shift-splitting  preconditioners}
are inspired by a combination of mechanisms of stationary methods, Hermitian/Skew-Hermitian splitting, and other classical approaches. In its most basic form, one applies a regular splitting that includes a parameter-dependent shift:
given a positive parameter $\alpha > 0$ and the identity matrix $I$, we write, say for $\sT$:
\begin{equation*}
\sT = \underbrace{\tfrac{1}{2}(\alpha I + \sT)}_{\mathcal{M}} - \underbrace{\tfrac{1}{2}(\alpha I - \sT)}_{\mathcal N},
\end{equation*}
and refer to $\mathcal{M}$ as the part to be inverted, either as a stationary scheme or as a preconditioner for a Krylov subspace solver.

In the context of double saddle-point systems, the shift-splitting approach has been studied both for the block-arrow type and the block-tridiagonal type systems. Often the original matrix is modified before the preconditioner is applied; for example, it is common to negate the second and third block rows of $\sT$ and only then apply preconditioning. Much of the analysis of these methods focuses on identifying optimal or near-optimal choices of the shift parameter.

Several variations of the shift-splitting iteration and/or preconditioning schemes have been developed for double saddle-point systems. For block-tridiagonal systems, we refer to \cite{ak26, bhb23, c19,lxm25,lz24}. For splitting-based preconditioners for the block arrow form, see \cite{cr23,dl23,l24,lz19,mhl23,mlm21,rcw22}.

\section{Concluding remarks}
\label{sec:conclusion}

The numerical solution of double saddle-point systems offers considerable challenges, and the broad importance of these systems across a variety of applications makes them a timely topic of investigation. The $\st$ family represents a broad set of systems of interest. The classification framework we have provided covers a practically relevant family of such systems, encompassing both the block-arrow and block-tridiagonal forms. By viewing these matrices through the lens of constrained optimization and exploiting their block structure, we established common algebraic, spectral, and factorization properties that underpin a wide class of existing models and solvers.

A compelling direction for future research is the identification of mathematical and algorithmic properties that are intrinsic to double and multiple saddle-point systems and not simply inherited from classical saddle-point theory. It is desirable to identify mathematical and numerical properties that are unique to these systems in the hope that this can lead to solution methods that fully exploit those properties. Extending these ideas to nonsymmetric formulations, inexact solvers, and large-scale multiphysics applications represents a natural continuation of the framework developed here.

\bibliographystyle{plain}
\bibliography{references}
\end{document}